\title{\LARGE Stability and rigidity of axisymmetric marginally outer trapped two-spheres}
\author{Gregory J. Galloway and Abraão Mendes}
\newcounter{mnotecount}
\newcommand{\p}{\partial}
\newcommand{\X}{\mathfrak{X}}
\newcommand{\e}{\varepsilon}
\renewcommand{\d}{\mathrm{d}}
\renewcommand{\L}{\mathcal{L}}
\renewcommand{\H}{\mathcal{H}}
\renewcommand{\div}{\operatorname{div}}
\newcommand{\id}{\operatorname{id}}
\newcommand{\tr}{\operatorname{tr}}
\newcommand{\R}{\mathbb{R}}
\renewcommand{\th}{\theta}
\newtheorem{theorem}{Theorem}
\newtheorem{lemma}[theorem]{Lemma}
\newtheorem{proposition}[theorem]{Proposition}
\theoremstyle{remark}
\newtheorem{remark}[theorem]{Remark}
\numberwithin{equation}{section}
\begin{document}

\maketitle

\begin{abstract}
In [7], H.~Bray, S.~Brendle, and A.~Neves studied rigidity properties of area-minimizing two-spheres in Riemannian three-manifolds with uniformly positive scalar curvature. In [13], these results were extended to marginally outer trapped surfaces (MOTS) in general initial data sets $(M^3,g,K)$ under a natural energy condition. In the present work, we refine the latter results to the setting of axisymmetric MOTS in initial data sets admitting a nontrivial Killing vector field. Conditions for the stability of such MOTS, as well as a new foliation lemma by axisymmetric surfaces of constant outward null expansion, are obtained. Finally, we discuss some aspects of the rotating Nariai spacetimes and their relation to these results.
\end{abstract}

\section{Introduction}

Minimal surfaces have long played a central role in differential geometry, arising as critical points of the area functional. In certain physical models, such as soap films spanning a wire frame, these surfaces describe equilibrium configurations determined by surface tension.

A rich theory has developed around the existence, stability, and rigidity of minimal surfaces. In particular, rigidity results show that under suitable curvature or symmetry assumptions, minimal surfaces are often uniquely determined. A striking example is the work of Bray, Brendle, and Neves \cite{BBN}, who established a sharp rigidity theorem for area-minimizing spheres in three-manifolds with positive scalar curvature. Their result highlights the profound connection between the existence of area-minimizing spheres that saturate a certain area upper bound and the global geometry of the ambient manifold.

In general relativity, marginally outer trapped surfaces (MOTS) naturally appear as Lorentzian analogues of minimal surfaces. Unlike minimal surfaces, MOTS are not defined variationally; rather, they are characterized by the vanishing of the outward null expansion $\theta^+$ along the orthogonal null geodesics at the surface. Nevertheless, MOTS possess a natural stability operator, closely analogous to the Jacobi operator for minimal surfaces. This operator governs the infinitesimal variations of $\theta^+$ and plays a central role in their analysis.

This analogy suggests that rigidity phenomena for MOTS may, in a sense, reflect those of minimal surfaces. Just as stable minimal spheres in certain ambient geometries must be round, one may ask: under what conditions is a MOTS uniquely determined by its surrounding spacetime geometry or initial data?

In this work, we address this question in the setting of rotationally symmetric spheres in suitable initial data sets. To some extent, the results presented here generalize those of \cite{GM2018} to the rotationally symmetric context.

\medskip

The paper is organized as follows. In Section~\ref{sec:preliminaries}, we present some basic definitions. In Section~\ref{sec:stability}, we address aspects of the stability of axisymmetric surfaces, and prove an infinitesimal rigidity statement (Proposition~\ref{prop:infinitesimal}) for closed, axisymmetric, stable, spherical MOTS in initial data sets admitting a Killing vector field; this result is subsequently used in the proof of the main theorem (Theorem~\ref{thm:main}). In Section~\ref{sec:foliation}, we establish, under natural conditions, the existence of a foliation by constant outward null expansion surfaces, each of which is invariant under a Killing vector field. Section~\ref{sec:main} contains the statement and proof of the main result of the paper. Finally, in Section~\ref{sec:Nariai}, we discuss certain aspects of the rotating Nariai spacetime and their relation to some of the results obtained in this work.

\newpage

\section{Preliminaries}\label{sec:preliminaries}

All manifolds in this paper are assumed to be orientable, unless otherwise stated.

Let $(M^3,g,K)$ be an initial data set, that is, $(M^3,g)$ is a three-dimensional Riemannian manifold and $K$ is a symmetric $(0,2)$-tensor defined on $M^3$.

Initial data sets arise naturally in general relativity as spacelike hypersurfaces $M^3$ in a spacetime $(\bar{M}^{3+1},\bar{g})$, where $g$ is the induced metric on $M^3$ and $K$ is its second fundamental form.

Let $\Sigma^2$ be a closed connected surface in $(M^3,g,K)$. As both $\Sigma^2$ and $M^3$ are orientable, there exists a unit vector field $\nu$ globally defined along $\Sigma^2$. If $\Sigma^2$ separates $M^3$, we denote by $M_+$ the connected component of $M \setminus \Sigma$ toward which $\nu$ points, and refer to it as the \emph{exterior} of $\Sigma^2$. In any case, by convention, we say that $\nu$ points to the outside of $\Sigma^2$.

The \emph{null expansion scalars} $\theta^+$ and $\theta^-$ of $\Sigma^2$ in $(M^3,g,K)$ are defined by
\begin{align*}
\theta^+ = \tr_\Sigma K + H, \quad 
\theta^- = \tr_\Sigma K - H,
\end{align*}
where $H = \div_\Sigma \nu$ is the mean curvature of $\Sigma^2$ in $(M^3,g)$, and $\tr_\Sigma K$ denotes the trace of the tensor $K$ restricted to tangent vectors to $\Sigma^2$.

The \emph{null second fundamental forms} $\chi^+$ and $\chi^-$ of $\Sigma^2$ in $(M^3,g,K)$ are given by
\begin{align*}
\chi^+=K|_\Sigma+A,\quad\chi^-=K|_\Sigma-A,
\end{align*} 
where $K|_\Sigma$ denotes the restriction of $K$ to the tangent spaces of $\Sigma^2$, and $A$ is the second fundamental form of $\Sigma^2$ in $(M^3,g)$. We adopt the sign convention in which $H = \tr A$, so that $\theta^\pm=\tr\chi^\pm$.

In a terminology inspired by the work of Penrose \cite{P}, a \emph{marginally outer trapped surface} (MOTS) is a closed surface $\Sigma^2$ whose outward null expansion vanishes identically, $\theta^+ \equiv 0$. Such surfaces represent the quasi-local boundary of a black hole in the initial data setting.

Let $\{\Sigma_t\}_{|t|<\epsilon}$ be a smooth variation of $\Sigma=\Sigma_0$, with variation vector field
\begin{align*}
\frac{\p}{\p t}\Big|_{t=0}=\phi\,\nu,
\end{align*}
for some smooth function $\phi$ on $\Sigma$. Let $\theta^+(t)$ denote the outward null expansion of $\Sigma_t$ computed with respect to the smooth choice of unit normals $\nu_t$ satisfying $\nu_0=\nu$.

It is well known (see \cite{AMS2005,AMS2008}; see also \cite{L}) that the first variation of $\theta^+$ is given by
\begin{align*}
\frac{\partial\theta^+}{\partial t}\Big|_{t=0}
= L\phi + \Big(\tau\theta^+ - \frac{1}{2}(\theta^+)^2\Big)\phi,
\end{align*}
where $\tau=\tr K$ is the trace of $K$ on $M^3$ with respect to $g$, and $L$ is a second-order elliptic operator acting on smooth functions $\phi:\Sigma^2\to\R$, defined by
\begin{align*}
L\phi = -\Delta\phi + 2\langle X,\nabla\phi\rangle + (Q - |X|^2 + \div X)\phi.
\end{align*}

Here $\Delta$ and $\nabla$ denote the Laplacian and gradient operators on $\Sigma^2$ with respect to the induced metric $\langle\,,\,\rangle$, and $X$ is the tangential vector field on $\Sigma^2$ metrically dual to the $1$-form $K(\nu,\cdot)|_\Sigma$.

The potential $Q$ is given by
\begin{align}\label{eq:Q}
Q = \kappa_\Sigma - (\mu + J(\nu)) - \frac{1}{2}|\chi^+|^2,
\end{align}
where $\kappa_\Sigma$ is the Gaussian curvature of $(\Sigma^2,\langle\,,\,\rangle)$, $\mu$ and $J$ are the energy and momentum densities associated to the initial data,
\begin{align*}
\mu=\frac{1}{2}(R-|K|^2+\tau^2),\quad J=\div(K-\tau g),
\end{align*}
and $R$ is the scalar curvature of $(M^3,g)$.

When $\Sigma^2$ is a MOTS, the first variation of $\theta^+$ reduces to the operator $L$, referred to as the \emph{MOTS stability operator}. In the time-symmetric case, that is, when $K \equiv 0$, the outward null expansion $\theta^+$ reduces to the mean curvature of $\Sigma^2$, and a MOTS is precisely a minimal surface. In this situation, $L$ coincides with the minimal surface stability operator, namely the Jacobi operator.

As discussed by Andersson, Mars, and Simon (\cite[Appendix~B]{AMS2008}), although the operator $L$ is not symmetric, it possesses a real eigenvalue $\lambda_1=\lambda_1(L)$ such that $\operatorname{Re}\lambda \geq \lambda_1$ for all (possibly complex) eigenvalues $\lambda$. Moreover, the eigenspace associated with $\lambda_1$ is one-dimensional and contains a smooth positive eigenfunction $u$. The eigenvalue $\lambda_1$ is called the \emph{principal eigenvalue}, and $u$ is called a \emph{principal eigenfunction} of $L$.

Then, as in \cite{AMS2005,AMS2008}, a MOTS $\Sigma^2$ is said to be \emph{stable} if $\lambda_1 \ge 0$. This is equivalent to the existence of a positive smooth function $\phi$ on $\Sigma^2$ such that $L \phi \ge 0$.

Another important differential operator on $\Sigma^2$ is the \emph{formal adjoint} of $L$:
\begin{align*}
L^*\phi=-\Delta\phi-2\langle X,\nabla\phi\rangle+(Q-|X|^2-\div X)\phi.
\end{align*}

A direct integration by parts, together with the divergence theorem, yields
\begin{align*}
\int_\Sigma\psi L\phi=\int_\Sigma\phi L^*\psi.
\end{align*}
Moreover, $L^*$ has the same principal eigenvalue as $L$. Indeed, if $u>0$ and $u^*>0$ are principal eigenfunctions of $L$ and $L^*$, respectively, then
\begin{align*}
(\lambda_1(L)-\lambda_1(L^*))\int_\Sigma u u^*=\int_\Sigma u^*Lu-\int_\Sigma uL^*u^*=0.
\end{align*} 
Since $u$ and $u^*$ are positive, this implies $\lambda_1(L)=\lambda_1(L^*)$ (see \cite[Appendix~B]{AMS2008}).

\section{Stability and infinitesimal rigidity of axisymmetric MOTS}\label{sec:stability}

Inspired by the works of Anderson, Mars, and Simon \cite{AMS2005,AMS2008}, Jaramillo, Reiris, and Dain \cite{JRD} introduced the notion of the \emph{spacetime stably outermost condition}\linebreak for axisymmetric MOTS $\Sigma^2$, i.e.\ MOTS that are invariant under an axial Killing vector $\eta$ on $\Sigma^2$. While the setting in \cite{JRD} is somewhat more general, here we restrict the discussion to initial data sets.

In the case of initial data sets, the `spacetime stably outermost condition' introduced in \cite{JRD} can be restated as follows: Assuming certain quantities on $\Sigma^2$ are axisymmetric, $\Sigma^2$ is said to be 
\emph{axisymmetrically stable} if there exists a positive 
\emph{axisymmetric function}
$\phi$ on $\Sigma^2$ such that $L \phi \ge 0$.

In particular, axisymmetrically stable MOTS are also stable in the standard sense. The first result we prove in this paper establishes a converse.

\begin{lemma}\label{lemma:1}
Let $(\Sigma^n,g_\Sigma)$ be a closed Riemannian manifold, and let $\eta$ be a Killing vector field on $\Sigma^n$. If $X \in \X(\Sigma)$ and $Q \in C^\infty(\Sigma)$ are invariant under~$\eta$, then the principal eigenfunction of the operator
\begin{align*}
L u = -\Delta u + 2 \langle X, \nabla u \rangle + Q u
\end{align*}
is also invariant under $\eta$.
\end{lemma}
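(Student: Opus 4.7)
The plan is to use the flow of $\eta$ and the fact that the principal eigenspace of $L$ is one-dimensional. Since $\Sigma^n$ is closed and $\eta$ is a complete Killing field, $\eta$ generates a one-parameter group of isometries $\{\varphi_t\}_{t \in \R}$. I will show that the principal eigenfunction $u > 0$ satisfies $u \circ \varphi_t = u$ for every $t$, which is equivalent to $\eta(u) = 0$.

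The key observation is that $L$ commutes with $\varphi_t^*$. Because $\varphi_t$ is an isometry, $\Delta$ intertwines with pullback, and $\nabla(\varphi_t^* \phi) = \varphi_t^* (\nabla \phi)$ in the sense that $(d\varphi_t^{-1})$ maps one gradient to the other. The $\eta$-invariance of $X$ gives $(\varphi_t)_* X = X$, hence
\begin{align*}
\langle X, \nabla(\varphi_t^*\phi)\rangle = \varphi_t^* \langle X, \nabla\phi \rangle,
\end{align*}
and the $\eta$-invariance of $Q$ gives $Q \cdot \varphi_t^*\phi = \varphi_t^*(Q\phi)$. Combining these yields $L(\varphi_t^* \phi) = \varphi_t^*(L\phi)$ for every smooth $\phi$.

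Next I apply this to the principal eigenfunction. Setting $u_t := u \circ \varphi_t = \varphi_t^* u$, we get $L u_t = \varphi_t^*(Lu) = \lambda_1 u_t$. Since $u_t$ is smooth and positive and the principal eigenspace is one-dimensional, there exists $c(t) > 0$ with $u_t = c(t)\, u$ and $c(0) = 1$. The group property $\varphi_{s+t} = \varphi_s \circ \varphi_t$ forces $c(s+t) = c(s) c(t)$, so $c(t) = e^{at}$ for some $a \in \R$.

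To finish, I use that $\varphi_t$ is an isometry, so it preserves the Riemannian volume element: $\int_\Sigma u_t = \int_\Sigma u$, which combined with $u_t = e^{at} u$ forces $a = 0$, hence $c \equiv 1$ and $u$ is $\eta$-invariant. (Alternatively one can differentiate $u_t = e^{at} u$ at $t=0$ to get $\eta(u) = a u$, and observe that $\int_\Sigma \eta(u) = -\int_\Sigma u \, \div \eta = 0$ since a Killing field is divergence-free, yielding $a = 0$.) There is no real obstacle here; the only point requiring a little care is the verification that $L$ commutes with $\varphi_t^*$, which comes down to the isometry property of the flow together with the two invariance hypotheses.
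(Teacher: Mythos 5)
Your proof is correct and follows essentially the same route as the paper: show $L$ commutes with the pullback by the flow of $\eta$, use simplicity of the principal eigenvalue to get $u\circ\varphi_t=e^{at}u$, and conclude $a=0$. The only (minor) difference is the last step, where the paper rules out $a\neq 0$ by the boundedness of $u\circ\varphi_t$ on the compact manifold, while you use volume preservation under the isometry (or the divergence-free property of Killing fields); both are valid.
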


\begin{proof}
Let $\psi_t$ be the flow generated by $\eta$. Saying that $X$ and $Q$ are invariant under $\eta$ means, in terms of the flow, that
$$
\d \psi_t \cdot X = X \circ \psi_t, \quad Q \circ \psi_t = Q.
$$

Let $u > 0$ be a principal eigenfunction of $L$, i.e. 
$$
L u = \lambda_1 u,
$$
where $\lambda_1 = \lambda_1(L)$ is the principal eigenvalue of $L$. For simplicity, fix $t$ and write $R = \psi_t$. Since $R$ is an isometry, it follows that
$$
\Delta (u \circ R) = (\Delta u) \circ R.
$$

On the other hand,
$$
\begin{aligned}
\langle X, \nabla (u \circ R) \rangle 
&= \d u \cdot (\d R \cdot X) = \d u \cdot (X \circ R) \\
&= (\d u \cdot X) \circ R = \langle X, \nabla u \rangle \circ R.
\end{aligned}
$$
Therefore,
$$
\begin{aligned}
L(u \circ R) 
&= -\Delta(u \circ R) + 2 \langle X, \nabla (u \circ R) \rangle + Q (u \circ R) \\
&= -(\Delta u) \circ R + 2 \langle X, \nabla u \rangle \circ R + (Q u) \circ R \\
&= (L u) \circ R = \lambda_1 (u \circ R).
\end{aligned}
$$

This shows that $u \circ R$ is an eigenfunction associated with the principal eigenvalue $\lambda_1$. Since $\lambda_1$ is simple, there exists a constant $c(R)$ such that
$$
u \circ R = c(R) u.
$$

In other words,
$$
u \circ \psi_t = c(t) u,
$$
where $c(t) := c(\psi_t)$. Because $\psi_0 = \id$, it follows that
$$
c(0) = 1.
$$
Furthermore, the group property of the flow implies
$$
c(t + s) = c(t) \cdot c(s).
$$
Hence, there exists a constant $a \in \mathbb{R}$ such that
$$
c(t) = e^{a t}.
$$

Suppose, for the sake of contradiction, that $a > 0$. Then
$$
c(t) = e^{a t} \to +\infty \quad \text{as} \quad t \to +\infty,
$$
which contradicts the boundedness of $u \circ \psi_t$, since $u$ is continuous on the compact manifold $\Sigma^n$. Similarly, if $a < 0$, then
$$
c(t) = e^{a t} \to +\infty \quad \text{as} \quad t \to -\infty,
$$
contradicting the boundedness of $u \circ \psi_t$ in backward time.

Therefore, it must be that
$$
a = 0,
$$
and consequently,
$$
u \circ \psi_t = u,
$$
i.e.\ $u$ is invariant under the flow generated by $\eta$.
\end{proof}

The next result provides natural conditions under which the vector field $X\in\X(\Sigma)$, dual to $K(\nu,\cdot)|_\Sigma$, and the potential $Q$ from \eqref{eq:Q} are invariant under a Killing vector field $\eta$.

\begin{lemma}\label{lemma:2}
Let $(M^3,g,K)$ be an initial data set, and let $\eta$ be a Killing vector field on $(M^3,g)$. 
Consider a closed connected surface $\Sigma^2\subset M^3$. Suppose that both $K$ and $\Sigma^2$ are invariant under the flow generated by $\eta$. 
Then the vector field $X\in\X(\Sigma)$, associated with $K(\nu,\cdot)|_\Sigma$, as well as the potential
\begin{align*}
Q = \kappa_\Sigma - (\mu+J(\nu)) - \frac{1}{2}|\chi^+|^2
\end{align*}
are invariant under $\eta$.
\end{lemma}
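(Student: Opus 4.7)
The plan is to work throughout with the flow $\psi_t$ generated by $\eta$. Since $\eta$ is Killing, each $\psi_t$ is an isometry of $(M^3,g)$; since $\Sigma^2$ is $\psi_t$-invariant, the restriction $\psi_t|_\Sigma$ is an isometry of $(\Sigma^2,g_\Sigma)$. A small ancillary fact I would establish first is that $\d\psi_t$ preserves the chosen unit normal, i.e.\ $\d\psi_t\cdot\nu=\nu\circ\psi_t$. This follows from a continuity argument: $\d\psi_t\cdot\nu_p$ is a unit normal to $\Sigma^2$ at $\psi_t(p)$, hence equals $\pm\nu_{\psi_t(p)}$, and connectedness of $\Sigma^2$ and of the $t$-interval, combined with $\psi_0=\id$, force the $+$ sign.

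Given this, the invariance $\d\psi_t\cdot X=X\circ\psi_t$ is a one-step verification from the defining relation $\langle X,V\rangle=K(\nu,V)$ for $V$ tangent to $\Sigma^2$: one equates the inner products $\langle\d\psi_t\cdot X_p,\d\psi_t\cdot V\rangle_{\psi_t(p)}$ and $\langle X_{\psi_t(p)},\d\psi_t\cdot V\rangle_{\psi_t(p)}$ by using, in turn, that $\psi_t$ is an isometry, that $\psi_t^*K=K$, and that $\nu$ is preserved, and then invokes the fact that $\d\psi_t$ is a linear isomorphism $T_p\Sigma\to T_{\psi_t(p)}\Sigma$ to conclude.

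For $Q$, I would dispose of each contribution separately. The Gaussian curvature $\kappa_\Sigma$ is invariant because $\psi_t|_\Sigma$ is an isometry of $(\Sigma^2,g_\Sigma)$. The energy density $\mu=\tfrac{1}{2}(R-|K|^2+\tau^2)$ is invariant because $R$ is invariant (as $\psi_t$ is an ambient isometry) while $|K|^2$ and $\tau$ are invariant (as $K$ is $\eta$-invariant). The momentum current $J=\div(K-\tau g)$ is an $\eta$-invariant $1$-form on $M^3$, since the divergence of an invariant symmetric $2$-tensor under an isometric flow is again invariant; combined with the preservation of $\nu$ this yields invariance of $J(\nu)$ on $\Sigma^2$. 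Finally, writing $\chi^+=K|_\Sigma+A$, the piece $K|_\Sigma$ is invariant as the restriction of an invariant ambient tensor to an invariant surface, and the second fundamental form $A(V,W)=\langle\nabla_V\nu,W\rangle$ is invariant since the Levi-Civita connection, the metric, and $\nu$ are all preserved under $\psi_t$. Hence $|\chi^+|^2$ is invariant, and summing the four contributions gives $Q\circ\psi_t=Q$.

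The only real obstacle is careful bookkeeping: one must keep track of the distinction between ambient tensors and their restrictions to $\Sigma^2$, and verify that pullback under $\psi_t$ commutes with each of the operations (restriction to $\Sigma^2$, raising indices, taking divergences) used to build $X$ and $Q$ from $K$, $g$, and $\nu$. Once $\d\psi_t\cdot\nu=\nu\circ\psi_t$ is in hand, every remaining step is a one-line consequence of $\psi_t$ being an isometry and $K$ being $\eta$-invariant.
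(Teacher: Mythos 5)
Your proposal is correct and follows essentially the same route as the paper: first establish $\d\psi_t\cdot\nu=\nu\circ\psi_t$ by the continuity/connectedness argument, then deduce invariance of $X$ from the invariance of $K$, $g$, and $\nu$ (the paper phrases this via Lie derivatives of the $1$-form $K(\nu,\cdot)$ rather than via the flow directly, but the content is the same), and finally treat the terms of $Q$ one by one. If anything, you are slightly more explicit than the paper, which handles $\kappa_\Sigma$ and $\mu$ in detail and dispatches $J(\nu)$ and $|\chi^+|^2$ with a ``similarly.''
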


\begin{proof}
Let $\psi_t$ denote the flow generated by $\eta$. Since $\psi_t$ is an isometry of $(M^3,g)$ and $\Sigma^2$ is invariant under $\eta$, it follows that $\psi_t$ maps $\Sigma^2$ onto itself and pushes forward the unit normal $\nu$ to another unit normal along $\Sigma^2$:
$$
\d\psi_t \cdot \nu_p \quad \text{is a unit normal vector at} \quad \psi_t(p).
$$

Because $\Sigma^2$ is connected and two-sided, the unit normal vector field $\nu$ is uniquely defined up to sign. Moreover, since $\d\psi_0 \cdot \nu = \nu$, the continuity of $\psi_t$ and the connectedness of $\Sigma^2$ ensure that this sign cannot change along the flow. Hence,
$$
\d\psi_t \cdot \nu = \nu \circ \psi_t,
$$
showing that $\nu$ is invariant under $\eta$.

Let $\omega = K(\nu, \cdot)$ be the $1$-form on $\Sigma^2$ obtained by contracting $K$ with $\nu$. Then
$$
\L_\eta \omega = (\L_\eta K)(\nu, \cdot) + K(\L_\eta \nu, \cdot) = 0,
$$
since both $K$ and $\nu$ are invariant under $\eta$.

On the other hand, for any tangential vector field $Y$ on $\Sigma^2$, we have the identity
$$
(\L_\eta \omega)(Y) = g(\L_\eta X, Y) + (\L_\eta g)(X, Y).
$$
Since $\eta$ is Killing, we have $\L_\eta g = 0$ and thus
$$
g(\L_\eta X, Y) = (\L_\eta \omega)(Y) = 0.
$$
Because this holds for all $Y$, we deduce that $\L_\eta X = 0$, i.e.\ $X$ is invariant under~$\eta$.

Furthermore, $\psi_t|_\Sigma$ is an isometry of $(\Sigma^2,\langle\,,\,\rangle)$, so $\kappa_\Sigma = \kappa_\Sigma \circ \psi_t$ on $\Sigma^2$; in other words, $\kappa_\Sigma$ is invariant under $\eta$.

To see that
$$
\mu = \frac{1}{2} \big( R - |K|^2 + (\tr K)^2 \big)
$$
is invariant under $\eta$, note that:
\begin{itemize}
    \item $R$ is invariant under $\eta$ because $\psi_t$ is an isometry;
    \item $|K|^2$ is invariant under $\eta$, since
    $$
    \L_\eta(|K|^2)
    = 2\langle \L_\eta K, K\rangle
      - 2\langle \L_\eta g, K\circ K\rangle
    = 0,
    $$
    where $K\circ K$ is defined by
    $$
    (K\circ K)_{ij} = g^{kl} K_{ik} K_{jl},
    $$
    and we used that both $K$ and $g$ are invariant under $\eta$;
    \item $\tr K$ is invariant under $\eta$ because
    $$
    \L_\eta(\tr K) = \tr(\L_\eta K) - \langle \L_\eta g, K\rangle = 0.
    $$
\end{itemize}

Similarly, we can prove that all remaining terms in $Q$ are invariant under the flow generated by $\eta$.
\end{proof}

Let $\Sigma^2$ be a MOTS in $(M^3,g,K)$ and consider the ``symmetrized'' operator on $\Sigma^2$,
\begin{align*}
\L u = -\Delta u + Q u,
\end{align*}
which is associated with the MOTS stability operator $L$, as studied by the first-named author and Schoen in \cite{GS}. By a simple modification of the arguments presented in the proof of the main result in \cite{GS}, 
one deduces that $\lambda_1(\L) \geq \lambda_1(L)$ (see \cite[Lemma~2.2]{GalMOTS1}).
In particular, if $\Sigma^2$ is stable, then $\lambda_1(\L) \geq 0$. Since $\L$ is symmetric, this implies that
\begin{align*}
\int_\Sigma( |\nabla f|^2 + Q f^2) \geq 0
\end{align*}
for every smooth function $f$ on $\Sigma^2$.

In \cite{JRD} (and, again, restricting to initial data sets), a strengthened stability inequality is obtained 
for surfaces $\Sigma^2$ that are axisymmetrically stable as described above, for axisymmetric functions $f$ (again, assuming certain other quantities along 
$\Sigma^2$ are axisymmetric). In view of Lemmas \ref{lemma:1} and \ref{lemma:2}, this stability inequality still holds assuming 
$\Sigma^2$ is stable in the standard sense, provided $K$ and $\Sigma^2$ are invariant under a nontrivial Killing vector field $\eta$ on $(M^3,g)$.

\begin{proposition}\label{prop:stability}
Let $(M^3,g,K)$ be an initial data set, and let $\eta$ be a nontrivial Killing vector field on 
$(M^3,g)$. Assume that $K$ is invariant under $\eta$. If $\Sigma^2$ is a closed stable MOTS in $(M^3,g,K)$ that is invariant under $\eta$, then
\begin{align*}
\int_{\Sigma\setminus\{\eta=0\}}|X^\eta|^2 f^2\le\int_\Sigma(|\nabla f|^2 + Q f^2),
\end{align*}
for every axisymmetric smooth function $f$ on $\Sigma^2$, where $X^\eta$ is the projection of $X$ onto $\eta$,
\begin{align}\label{eq:xnu}
X^\eta = \frac{\langle X,\eta\rangle}{\langle \eta,\eta\rangle}\,\eta.
\end{align}
\end{proposition}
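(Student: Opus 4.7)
The plan is to promote the given stability to an axisymmetric stability by means of Lemmas~\ref{lemma:1} and~\ref{lemma:2}, then apply the classical logarithmic substitution to the principal eigenfunction, and finally exploit the $\eta$-invariance of $f$ to decompose $X$ into its $\eta$-parallel and $\eta$-orthogonal parts. First, since $\Sigma^2$ is stable, there exists a positive principal eigenfunction $u$ of $L$ with $Lu = \lambda_1(L)u \ge 0$. By Lemma~\ref{lemma:2}, the vector field $X$ and the potential $Q$ are invariant under $\eta$; together with the $\eta$-invariance of the induced metric, this makes the full zeroth-order coefficient $Q - |X|^2 + \div X$ of $L$ also invariant. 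Lemma~\ref{lemma:1} then allows me to take $u$ to be axisymmetric.

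Next, I would set $v = \log u$ and use $\Delta u = u(\Delta v + |\nabla v|^2)$ to convert $Lu \ge 0$ into the pointwise inequality
$$-\Delta v - |\nabla v|^2 + 2\langle X, \nabla v\rangle + Q - |X|^2 + \div X \ge 0.$$
Multiplying by $f^2$ and integrating over $\Sigma^2$, the two divergence terms integrate by parts via $\int f^2 \Delta v = -\int 2f\langle \nabla f, \nabla v\rangle$ and $\int f^2 \div X = -\int 2f\langle \nabla f, X\rangle$. Grouping all the resulting terms recognizes a completed square, yielding
$$\int_\Sigma \bigl(|\nabla f|^2 + Q f^2\bigr) \ge \int_\Sigma |\nabla f - f\nabla v + f X|^2.$$
This is the standard symmetrized-stability step; no use of axisymmetry has yet been made.

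The axisymmetric improvement comes last. Because both $f$ and $v$ are axisymmetric — the former by hypothesis, the latter by the first step — one has $\langle \nabla f, \eta\rangle = \langle \nabla v, \eta\rangle = 0$, so $\nabla f$ and $\nabla v$ lie in the orthogonal complement of $\eta$ on $\Sigma \setminus \{\eta = 0\}$. Writing $X = X^\eta + X^\perp$ according to this decomposition, the Pythagorean identity gives
$$|\nabla f - f\nabla v + fX|^2 = |\nabla f - f\nabla v + fX^\perp|^2 + f^2|X^\eta|^2 \ge f^2|X^\eta|^2.$$
Since the zero set of a nontrivial Killing field on a closed surface is discrete and hence of measure zero, integrating recovers the claimed inequality. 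The only step I expect to require careful presentation is precisely this orthogonal decomposition: without axisymmetry the $\eta$-parallel part of $X$ would be absorbed into the completed square and no term of the form $f^2|X^\eta|^2$ could be isolated, so the whole improvement hinges on recognizing that axisymmetric gradients have no $\eta$-component. Everything else is routine integration by parts on a closed manifold.
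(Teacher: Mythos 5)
Your proof is correct, and it uses the same essential ingredients as the paper: Lemmas~\ref{lemma:1} and~\ref{lemma:2} to make the principal eigenfunction axisymmetric, the logarithmic substitution $v=\log u$ \`a la Galloway--Schoen, and the orthogonality of axisymmetric gradients to $\eta$ to isolate the $f^2|X^\eta|^2$ term. The one genuine difference is organizational, and it is a simplification: the paper splits $X=X^\eta+X^\perp$ \emph{before} integrating, so it must work on $\Sigma\setminus\{\eta=0\}$ from the start, excise geodesic disks around the two zeros of $\eta$ on the sphere, and check that the boundary integrals of $\langle Y,N_i\rangle+\langle X^\eta,N_i\rangle$ vanish as the disks shrink (using that $|X^\eta|\le|X|$ is bounded). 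You instead integrate by parts over all of the closed surface first -- everything there is smooth, so no boundary terms arise -- obtaining $\int_\Sigma|\nabla f-f\nabla v+fX|^2\le\int_\Sigma(|\nabla f|^2+Qf^2)$, and only then apply the pointwise Pythagorean identity on the full-measure set $\Sigma\setminus\{\eta=0\}$. This sidesteps the paper's Case~1/Case~2 excision argument entirely. The only point worth making explicit when you write this up is why $\eta$ cannot vanish identically on $\Sigma$ (the zero set of a nontrivial Killing field on $M^3$ has codimension at least two, so $\eta|_\Sigma$ is a nontrivial Killing field of the induced metric with isolated zeros), which justifies your measure-zero claim.
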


Once one invokes Lemmas~\ref{lemma:1} and~\ref{lemma:2}, the proposition essentially follows from \cite[Lemma~1]{JRD}. However, because of the substantial differences in notation, together with certain small additional issues we address, we find it useful to include the proof here.  While the proofs are somewhat similar, our proof roughly follows along the lines of arguments in the main result in \cite{GS}.

\begin{proof} 
Let $u>0$ be a principal eigenfunction of $L$. Since $\Sigma^2$ is stable, we have $\lambda_1\ge0$, so
\begin{align*}
0
&\le \lambda_1 u
= -\Delta u + 2\langle X,\nabla u\rangle
+ (Q + \div X - |X|^2)u \\
&= -\Delta u + 2\langle X^\perp, \nabla u\rangle
+ (Q + \div X^\perp - |X^\perp|^2) u
+ (\div X^\eta - |X^\eta|^2) u
\end{align*}
on $\Sigma\setminus\{\eta=0\}$, where $X^\perp = X - X^\eta$.
Here we used that $\langle X^\eta,\nabla u\rangle = 0$, since by Lemmas~\ref{lemma:1} and~\ref{lemma:2} the function $u$ is invariant under $\eta$ (i.e.\ $u$ is axisymmetric).

Thus, on $\Sigma\setminus\{\eta=0\}$,
\begin{align}\label{eq:aux.1}
0 \le \lambda_1
= \div Y - |Y|^2 + Q + \div X^\eta - |X^\eta|^2,
\end{align}
where $Y = X^\perp - \nabla\ln u$.

Multiplying \eqref{eq:aux.1} by $f^2$ yields
\begin{align*}
0 &\le f^2 \div Y - f^2 |Y|^2 + Q f^2
+ f^2 \div X^\eta - f^2 |X^\eta|^2 \\
&= \div (f^2 Y) - 2f \langle \nabla f, Y\rangle - f^2|Y|^2 + Q f^2  + \div (f^2 X^\eta)\\
&\quad\quad - 2f \langle \nabla f, X^\eta\rangle - f^2|X^\eta|^2.
\end{align*}

Since $f$ is axisymmetric, $\langle\nabla f, X^\eta\rangle = 0$. Moreover,
\begin{align*}
-2f\langle\nabla f,Y\rangle - f^2 |Y|^2
= |\nabla f|^2 - |\nabla f + f Y|^2 \le |\nabla f|^2.
\end{align*}
Therefore,
\begin{align*}
|X^\eta|^2 f^2  \le |\nabla f|^2 + Q f^2
+ \div (f^2(Y+X^\eta)).
\end{align*}

\medskip

It is well known that the only closed orientable surfaces admitting a nontrivial Killing vector field are the two-sphere $S^2$ and the two-torus $T^2$.

\paragraph{Case 1: $\Sigma^2$ is a two-torus.}  
In this case, $\eta \neq 0$ everywhere on $\Sigma^2$. Integrating the last inequality over $\Sigma^2$ and applying the divergence theorem immediately yields the desired estimate.

\paragraph{Case 2: $\Sigma^2$ is a two-sphere.}  
Here, $\eta$ has exactly two zeros, say $p_1$ and $p_2$. For small $\varepsilon > 0$, let $D_\varepsilon(p_i)$ denote the geodesic disk of radius $\varepsilon$ centered at $p_i$. Integrating the inequality over
$$
\Sigma_\varepsilon := \Sigma \setminus( D_\varepsilon(p_1) \cup D_\varepsilon(p_2))
$$
and applying the divergence theorem gives
\begin{align*}
\int_{\Sigma_\varepsilon} |X^\eta|^2 f^2
&\le \int_{\Sigma_\varepsilon} ( |\nabla f|^2 + Q f^2)
+ \sum_{i=1}^2 \int_{\partial D_\varepsilon(p_i)}
( \langle Y, N_i \rangle + \langle X^\eta, N_i \rangle ) f^2,
\end{align*}
where $N_i$ is the unit normal to $\partial D_\varepsilon(p_i)$, tangent to $\Sigma^2$ and pointing into $D_\varepsilon(p_i)$.

Note that $|X^\perp|^2+|X^\eta|^2=|X|^2$. Since $X$ and $u > 0$ are smooth, both $X^\eta$ and $Y$ are uniformly bounded on $\partial D_\varepsilon(p_i)$, $i = 1,2$. Therefore, letting $\varepsilon \searrow 0$ yields
\begin{align*}
\int_{\Sigma\setminus\{p_1,p_2\}} |X^\eta|^2 f^2
\le \int_\Sigma(|\nabla f|^2 + Q f^2),
\end{align*}
which is the desired inequality.
\end{proof}

\begin{remark}
In the context of initial data sets, and under appropriate axisymmetry assumptions, certain results known to hold for MOTS that are axisymmetrically stable (e.g.\ because their proofs rely on the stability inequality) will now hold for MOTS that are stable in the standard sense (in particular, for MOTS that are locally weakly outermost).
\end{remark}

As a first application of Proposition \ref{prop:stability}, we obtain an upper bound on the area of a stable MOTS that is invariant under a Killing vector field $\eta$, and establish certain rigidity if the area bound is saturated. This result is a refinement in the axisymmetric setting of the area bound (and associated rigidity) obtained in Proposition 3.1 in \cite{GM2018}, where now the area bound involves the angular momentum. 

\begin{proposition}[Infinitesimal rigidity]\label{prop:infinitesimal}
Let $(M^3,g,K)$ be an initial data set, and let $\eta$ be a nontrivial Killing vector field on $(M^3,g)$. Assume that $K$ is invariant under $\eta$. If $\Sigma^2$ is a closed stable MOTS in $(M^3,g,K)$ that is invariant under $\eta$, and if 
$$
\mu + J(\nu) \ge c
$$
along $\Sigma^2$ for some constant $c > 0$, then $\Sigma^2$ is topologically $S^2$ and its area satisfies
\begin{align}\label{eq:area_inequality}
|\Sigma| \le \frac{4\pi}{c+\omega},
\end{align}
where
\begin{align}\label{eq:omega}
\omega := \frac{1}{|\Sigma|} \int_{\Sigma \setminus \{\eta=0\}} |X^\eta|^2 .
\end{align}
Moreover, if equality holds in \eqref{eq:area_inequality}, then:
\begin{enumerate}
\item[\em (1)] The null second fundamental form $\chi^+$ of $\Sigma^2$ vanishes.
\item[\em (2)]The Gaussian curvature of $\Sigma^2$ satisfies
$$
\kappa_\Sigma = c + |X^\eta|^2 \quad \text{along} \quad \Sigma\setminus\{\eta=0\};
$$
in particular, $|X^\eta|^2$ extends smoothly to the entire surface $\Sigma^2$.
\item[\em (3)] One has $\mu+J(\nu) \equiv c$ along $\Sigma^2$.
\item[\em (4)] The principal eigenvalue $\lambda_1(L)$ equals zero.
\end{enumerate}
\end{proposition}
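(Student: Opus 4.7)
The plan is to apply Proposition~\ref{prop:stability} with the constant test function $f\equiv 1$, which is trivially axisymmetric, but to do so in a way that preserves all the intermediate information that will be needed for the rigidity statements. Rather than invoking the integrated inequality directly, I would revisit the pointwise inequality
\begin{align*}
0 \le \lambda_1 = \operatorname{div} Y - |Y|^2 + Q + \operatorname{div} X^\eta - |X^\eta|^2
\end{align*}
that appears in the proof of Proposition~\ref{prop:stability} on $\Sigma\setminus\{\eta=0\}$. Integrating this identity over $\Sigma_\varepsilon := \Sigma\setminus(D_\varepsilon(p_1)\cup D_\varepsilon(p_2))$ (or over all of $\Sigma$ in the toroidal case) and letting $\varepsilon\searrow 0$, the boundary contributions of the divergence terms vanish by the boundedness of $X^\eta$ and $Y$ near the fixed points of $\eta$, yielding
\begin{align*}
\lambda_1|\Sigma| + \int_{\Sigma\setminus\{\eta=0\}}|Y|^2 + \omega|\Sigma| \le \int_\Sigma Q.
\end{align*}

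Next I would substitute $Q=\kappa_\Sigma-(\mu+J(\nu))-\frac{1}{2}|\chi^+|^2$, apply Gauss--Bonnet, and use $\mu+J(\nu)\ge c$ to obtain
\begin{align*}
(\lambda_1+c+\omega)|\Sigma| + \int|Y|^2 + \int\bigl(\mu+J(\nu)-c\bigr) + \tfrac{1}{2}\int|\chi^+|^2 \le 2\pi\chi(\Sigma). \tag{$\ast$}
\end{align*}
Since $(c+\omega)|\Sigma|>0$, the right-hand side must be positive, which forces $\chi(\Sigma)=2$ and rules out the torus case, establishing the topology. Dropping the nonnegative terms then gives the area bound $(c+\omega)|\Sigma|\le 4\pi$.

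If equality holds in \eqref{eq:area_inequality}, every nonnegative term in $(\ast)$ must vanish, which immediately delivers items (1), (3), and (4) together with $Y\equiv 0$ on $\Sigma\setminus\{\eta=0\}$. For item (2), I would return to the pointwise identity and note that, with $\lambda_1=0$ and $Y\equiv 0$, it reduces to $|X^\eta|^2 = Q+\operatorname{div}X^\eta$; using (1) and (3) this gives $\kappa_\Sigma = c + |X^\eta|^2 - \operatorname{div}X^\eta$.

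The last step, and the place I expect the only real subtlety, is showing $\operatorname{div}X^\eta\equiv 0$ on $\Sigma\setminus\{\eta=0\}$. Writing $X^\eta = h\,\eta$ with $h=\langle X,\eta\rangle/\langle\eta,\eta\rangle$, Lemmas~\ref{lemma:1} and~\ref{lemma:2} imply that $h$ is axisymmetric, so $dh(\eta)=0$; on the other hand, $\eta|_\Sigma$ is a Killing field of $(\Sigma^2,\langle\,,\,\rangle)$ and hence divergence-free, so $\operatorname{div}(h\eta) = dh(\eta) + h\operatorname{div}\eta = 0$. This yields $\kappa_\Sigma = c+|X^\eta|^2$ on $\Sigma\setminus\{\eta=0\}$, and since $\kappa_\Sigma$ and $c$ are smooth on all of $\Sigma$, the function $|X^\eta|^2$ extends smoothly across the two poles, completing the proof.
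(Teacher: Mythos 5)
Your proof is correct, and while the first half mirrors the paper, the rigidity part takes a genuinely different route. For the topology and the area bound you do essentially what the paper does --- test with $f\equiv 1$ --- except that you integrate the pointwise eigenfunction identity \eqref{eq:aux.1} directly, keeping the $\lambda_1$ and $|Y|^2$ terms explicit in your master relation $(\ast)$. This has the pleasant side effect that item (4) falls out of the same identity, whereas the paper obtains $\lambda_1=0$ only at the very end by a separate integration of \eqref{eq:aux.1} over $\Sigma_\varepsilon$. The real divergence is in item (2): the paper polarizes the quadratic form $q$, deducing $q(1,h)=0$ for every axisymmetric $h$ and hence $Q-|X^\eta|^2\equiv 0$ pointwise (using that $Q-|X^\eta|^2$ is itself axisymmetric); you instead read off $|X^\eta|^2=Q+\operatorname{div}X^\eta$ from the pointwise identity once $Y\equiv 0$ and $\lambda_1=0$ are known, and then kill the divergence term by writing $X^\eta=h\,\eta$ with $h$ axisymmetric and $\eta|_\Sigma$ a divergence-free Killing field of the induced metric. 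That observation is correct, and it is in fact a general identity on $\Sigma\setminus\{\eta=0\}$, valid without any equality assumption --- it would even let one drop the $\operatorname{div}X^\eta$ boundary terms from the paper's own computations. Both routes are sound: the paper's polarization argument never needs to know anything about $\operatorname{div}X^\eta$, while yours is more pointwise in flavor and makes the vanishing of $Y$ an explicit output of the equality case.
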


\begin{proof}
By Proposition~\ref{prop:stability}, the bilinear form
$$
q(f,h) := \int_{\Sigma \setminus \{\eta=0\}} ( \langle\nabla f,\nabla h\rangle + (Q - |X^\eta|^2) fh )
$$
is positive semidefinite on the space of axisymmetric smooth functions on $\Sigma^2$.

Choosing $f=h \equiv 1$ gives
\begin{align*}
0 &\le q(1,1)
   = \int_{\Sigma \setminus \{\eta=0\}} (Q - |X^\eta|^2) \\
  &= \int_\Sigma \Big( \kappa_\Sigma - (\mu+J(\nu)) - \frac12 |\chi^+|^2 \Big) 
     - \omega|\Sigma| \\
  &\le 2\pi \chi(\Sigma) - (c+\omega)|\Sigma|,
\end{align*}
where we have used $\mu+J(\nu) \ge c$ and the Gauss-Bonnet theorem.

Since $c>0$ and $\omega \ge 0$, we deduce $\chi(\Sigma) > 0$; hence $\Sigma$ is topologically $S^2$, and
$$
|\Sigma| \le \frac{4\pi}{c+\omega}.
$$

Moreover, if equality holds in \eqref{eq:area_inequality}, then all inequalities above are equalities. In particular, $\chi^+ \equiv 0$, $\mu+J(\nu) \equiv c$, and $q(1,1) = 0$. 

For any $\alpha \in \mathbb{R}$ and any axisymmetric function $h$ on $\Sigma^2$,
$$
0 \le q(\alpha+h,\alpha+h) 
   = \alpha^2 q(1,1) + 2\alpha\,q(1,h) + q(h,h) 
   = 2\alpha\,q(1,h) + q(h,h),
$$
which implies $q(1,h) = 0$, that is,
\begin{align*}
\int_{\Sigma\setminus\{\eta=0\}}(Q-|X^\eta|^2)h=0
\end{align*}
for every axisymmetric $h$. Since $Q - |X^\eta|^2$ is axisymmetric, this yields
$$
Q - |X^\eta|^2 \equiv 0 \quad \text{on} \quad \Sigma\setminus\{\eta=0\};
$$
thus
$$
|X^\eta|^2 = Q = \kappa_\Sigma - (\mu+J(\nu)) - \frac12 |\chi^+|^2 = \kappa_\Sigma - c.
$$

Finally, integrating inequality \eqref{eq:aux.1} over 
$\Sigma_\varepsilon := \Sigma \setminus (D_\varepsilon(p_1) \cup D_\varepsilon(p_2))$, we find
\begin{align*}
0 \le \lambda_1 |\Sigma_\varepsilon| 
&\le \int_{\Sigma_\varepsilon} (Q - |X^\eta|^2 - |Y|^2)
   + \sum_{i=1}^2 \int_{\partial D_\varepsilon(p_i)} 
       (\langle Y, N_i\rangle + \langle X^\eta, N_i\rangle) \\
&\le \sum_{i=1}^2 \int_{\partial D_\varepsilon(p_i)} 
       (\langle Y, N_i\rangle + \langle X^\eta, N_i\rangle)\rightarrow0\quad\text{as}\quad\varepsilon\searrow0.
\end{align*}
Hence $\lambda_1 = 0$, as claimed.
\end{proof}

\begin{remark} 
In our notation, the Komar angular momentum $\mathcal{J}$ (see, e.g., \cite{DR}) of a surface 
$\Sigma^2$ invariant under a Killing vector field $\eta$ is obtained  by integrating 
$K(\nu, \eta) = \langle X, \eta\rangle$ over $\Sigma$. Hence, we see from \eqref{eq:xnu} and the expression for $\omega$ that, if $\mathcal{J} \ne 0$,  then $\omega > 0$ (equivalently, if  
$\omega = 0$ then $\mathcal{J} =0$).  Thus, while the angular momentum determines a lower bound for the area (see \cite{DR, JRD}), it also influences the upper bound.  In Section~\ref{sec:Nariai}, we explore this influence on the class of rotating Nariai spacetimes, which arise as a certain limit of the Kerr-de Sitter spacetime.
\end{remark}

\section{Foliation lemma}\label{sec:foliation}

An important tool in proving splitting results in differential geometry is the existence of foliations by constant mean curvature surfaces in the Riemannian setting, or by constant null expansion surfaces in the initial data context (see, e.g., \cite{ACG,BBN,GM2018,GM2025,MM,M}).

In this section, we establish, under natural assumptions, the existence of a foliation by constant outward null expansion surfaces, each of which is invariant under a Killing vector field~$\eta$.  We begin with two preliminary lemmas.

Let $\Sigma^2$ be a topological two-sphere endowed with a Riemannian metric $g_\Sigma$, and let $\eta$ be a nontrivial Killing vector field on $(\Sigma^2, g_\Sigma)$. As a consequence, $(\Sigma^2, g_\Sigma)$ is rotationally symmetric (see \cite{CLT}), meaning that the metric $g_\Sigma$ can be expressed in the form
\begin{align}\label{eq:coord_sys}
g_\Sigma = d\theta^2 + \rho^2(\theta)\,d\phi^2, \quad \phi \sim \phi + 2\pi,
\end{align}
where $\rho > 0$ on $(0, \pi)$ and satisfies $\rho(0) = \rho(\pi) = 0$, and $\eta=\p_\phi$.

It is well known that a necessary and sufficient condition for $\rho$ to be smooth on $S^2$ is that (see \cite{Petersen})
$$
\rho'(0) = 1, \quad \rho'(\pi) = -1,
$$
and that all its even-order derivatives vanish at the poles, i.e.
$$
\rho^{(2k)}(0) = \rho^{(2k)}(\pi) = 0 \quad \text{for all} \quad k = 1, 2, \ldots
$$

Consider the linear subspace
$$
F = \{ f \in C^{2,\alpha}(\Sigma); \L_\eta f = 0 \}
$$
of axisymmetric functions in $E = C^{2,\alpha}(\Sigma)$, and let $P$ denote the projection of $E$ onto $F$.
If $T$ is the period of $\psi_t$, the flow generated by $\eta$, then
$$
(Pf)(x) = \frac{1}{T} \int_0^T f(\psi_t(x)) \, dt.
$$
Clearly, $(Pf) \circ \psi_s = Pf$ for every $s$, that is, $Pf$ is invariant under $\eta$.

In the coordinate system \eqref{eq:coord_sys}, the operator $P$ takes the form
$$
(Pf)(\theta,\phi) = \frac{1}{2\pi} \int_0^{2\pi} f(\theta, \phi + t) \, dt
$$
and satisfies
$$
(Pf)(\theta,\phi) = (Pf)(\theta,0), \quad \forall (\theta,\phi).
$$

\medskip

\begin{lemma}
If $X \in \mathfrak{X}(\Sigma)$ and $Q \in C^\infty(\Sigma)$ are invariant under $\eta$,
then the projection $P$ commutes with the operator
$$
L f = -\Delta f + 2\langle X, \nabla f \rangle + Q f, \quad f\in C^\infty(\Sigma).
$$
\end{lemma}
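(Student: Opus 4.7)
The plan is to exploit the integral representation
$$(Pf)(x) = \frac{1}{T}\int_0^T f(\psi_t(x))\,dt$$
together with the fact that $L$ commutes with pullback by each isometry $\psi_t$, and then interchange $L$ with the averaging integral.

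First, I would verify the pointwise commutation $L(f\circ\psi_t) = (Lf)\circ\psi_t$ for every $t$ and every $f\in C^\infty(\Sigma)$. This is essentially already present inside the proof of Lemma~\ref{lemma:1}: $\psi_t$ is an isometry of $(\Sigma,g_\Sigma)$ so $\Delta(f\circ\psi_t) = (\Delta f)\circ\psi_t$; the $\eta$-invariance $d\psi_t\cdot X = X\circ\psi_t$ yields $\langle X,\nabla(f\circ\psi_t)\rangle = \langle X,\nabla f\rangle\circ\psi_t$; and $Q\circ\psi_t = Q$ gives $Q(f\circ\psi_t) = (Qf)\circ\psi_t$. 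Summing these three identities produces the desired pointwise commutation.

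Second, I would differentiate under the integral sign. Since $L$ is a second-order linear operator with smooth coefficients on the compact surface $\Sigma$, and since $(t,x)\mapsto \psi_t(x)$ is smooth, the standard Leibniz rule in local coordinates applies. Combining this with the first step gives
$$L(Pf)(x) = \frac{1}{T}\int_0^T L(f\circ\psi_t)(x)\,dt = \frac{1}{T}\int_0^T (Lf)(\psi_t(x))\,dt = P(Lf)(x),$$
so $LP = PL$ on $C^\infty(\Sigma)$, as required.

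I do not anticipate any genuine obstacle. The only step that requires a brief justification is the interchange of the differential operator $L$ with the average over the closed orbit, but the smoothness of $f$, of the coefficients of $L$, and of the flow $\psi_t$ makes the interchange automatic via Leibniz's rule.
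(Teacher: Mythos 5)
Your proposal is correct and follows essentially the same route as the paper: both establish the pointwise commutation $L(f\circ\psi_t)=(Lf)\circ\psi_t$ by citing the computation in the proof of Lemma~\ref{lemma:1}, and then pass $L$ through the averaging integral. The only difference is that you explicitly justify the differentiation under the integral sign, which the paper takes for granted.
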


\begin{proof}
Applying $L$ to $Pf$ gives
$$
L(Pf) = \frac{1}{T} \int_0^T L(f\circ\psi_t) \, dt.
$$

Since $X$ and $Q$ are invariant under $\eta$, and $\eta$ is a Killing vector field on $(\Sigma^2, g_\Sigma)$, it follows from the proof of Lemma~\ref{lemma:1} that $L$ is preserved by the flow~$\psi_t$:
$$
L(f\circ\psi_t) = (Lf)\circ\psi_t.
$$
Therefore,
$$
L(Pf) = \frac{1}{T} \int_0^T (Lf)\circ\psi_t \, dt = P(Lf),
$$
which shows that $P$ and $L$ commute.
\end{proof}

\begin{lemma}\label{lemma:4}
Let $(M^{n+1},g)$ be a complete Riemannian manifold, and let $\eta$ be a nontrivial Killing vector field on $(M^{n+1},g)$. Consider a connected, two-sided, embedded hypersurface $\Sigma^n$ in $(M^{n+1},g)$. Assume that $\Sigma^n$ is invariant under $\eta$. Then the following hold:
\begin{enumerate}
\item[\em (1)] The second fundamental form $A$ and the mean curvature $H = \tr A$ of $\Sigma^n$ in $(M^{n+1},g)$ are invariant under $\eta$. In particular, if $K$ is a symmetric $(0,2)$-tensor on $(M^{n+1},g)$ that is invariant under $\eta$, then the null second\linebreak fundamental form $\chi^+ := K|_\Sigma + A$ and the outward null expansion\linebreak $\theta^+ := \tr\chi^+$ of $\Sigma^n$ in $(M^{n+1},g,K)$ are also invariant under $\eta$.
\item[\em (2)] Let $f \in C^\infty(\Sigma)$, and define
$$
\Sigma_f = \{ \exp_p(f(p)\,\nu_p) ; p \in \Sigma \},
$$
where $\nu$ is a globally defined unit normal vector field along $\Sigma^n$. If $f$ is invariant under $\eta$, then $\Sigma_f$ is also invariant under $\eta$.
\end{enumerate}
\end{lemma}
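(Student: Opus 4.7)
The plan is to exploit the fact that the flow $\psi_t$ generated by $\eta$ is an isometry of $(M^{n+1},g)$ that preserves $\Sigma^n$, and to combine this with two ingredients already in play: the invariance of the unit normal $\nu$ under $\psi_t$, established in the proof of Lemma~\ref{lemma:2} (the same argument works here, since $\Sigma^n$ is connected and two-sided, so the two-valued unit normal cannot flip sign along a continuous family of isometries starting at the identity), and the standard naturality of the exponential map under isometries.

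For part~(1), I would begin by recording that $\d\psi_t\cdot\nu=\nu\circ\psi_t$ along $\Sigma^n$. The second fundamental form is defined intrinsically by $A(Y,Z)=g(\nabla_Y\nu,Z)$ for $Y,Z$ tangent to $\Sigma^n$. Since $\psi_t$ preserves $g$ and therefore the Levi-Civita connection, and since it also preserves $\nu$ and $\Sigma^n$, a direct naturality computation yields $\psi_t^\ast A=A$, i.e. $A$ is invariant under $\eta$. Taking the trace gives the invariance of $H$. Because both $K|_\Sigma$ (by hypothesis) and $A$ (just shown) are invariant under $\eta$, so are $\chi^+=K|_\Sigma+A$ and $\theta^+=\tr\chi^+$.

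For part~(2), the key fact is that isometries intertwine the exponential map: for any isometry $R$ of $(M^{n+1},g)$, any $p\in M$, and any $v\in T_pM$, one has $R(\exp_p v)=\exp_{R(p)}(\d R\cdot v)$, as long as both sides are defined (ensured by the completeness of $(M^{n+1},g)$). Applying this with $R=\psi_t$ and $v=f(p)\,\nu_p$, and using $\d\psi_t\cdot\nu_p=\nu_{\psi_t(p)}$ from part~(1), I obtain
\begin{align*}
\psi_t\!\left(\exp_p(f(p)\,\nu_p)\right)
&= \exp_{\psi_t(p)}\!\left(f(p)\,\d\psi_t\cdot\nu_p\right) \\
&= \exp_{\psi_t(p)}\!\left(f(p)\,\nu_{\psi_t(p)}\right).
\end{align*}
The axisymmetry of $f$ lets me replace $f(p)$ by $f(\psi_t(p))$ on the right, so the image is by definition a point of $\Sigma_f$. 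This shows $\psi_t(\Sigma_f)\subset\Sigma_f$, and applying the same argument to $\psi_{-t}$ yields equality, hence $\Sigma_f$ is invariant under $\eta$.

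The argument is essentially bookkeeping; there is no deep obstacle. The only step requiring mild care is the invariance $\d\psi_t\cdot\nu=\nu\circ\psi_t$ in this slightly more general hypersurface context, which is handled exactly as in Lemma~\ref{lemma:2} via two-sidedness and connectedness of $\Sigma^n$.
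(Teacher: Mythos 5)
Your proposal is correct and follows essentially the same route as the paper's proof: both establish $\d\psi_t\cdot\nu=\nu\circ\psi_t$ as in Lemma~\ref{lemma:2}, deduce $\psi_t^*A=A$ from the invariance of the Levi-Civita connection under the isometry $\psi_t$, and prove part~(2) via the naturality of the exponential map under isometries (the paper phrases this as the statement that $s\mapsto\psi_t(\exp_p(s\,\nu_p))$ is the geodesic with initial data $(\psi_t(p),\d\psi_t\cdot\nu_p)$, which is the same fact). The only cosmetic difference is that you explicitly note applying $\psi_{-t}$ to upgrade $\psi_t(\Sigma_f)\subset\Sigma_f$ to equality.
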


\begin{remark}
If $(M^{n+1},g)$ is not necessarily complete, then item~(2) of Lemma~\ref{lemma:4} still holds for $f \in C^\infty(\Sigma)$ provided that $\|f\|_\infty$ is sufficiently small. Item~(1), on the other hand, holds regardless of the completeness of $(M^{n+1},g)$.
\end{remark}

\begin{proof}
Let $\psi_t$ denote the flow generated by $\eta$. Since $R=\psi_t$ is an isometry of $(M^{n+1},g)$, the Levi-Civita connection is invariant under $R$, i.e.
$$
\nabla_{R_* X} R_* Y = R_* (\nabla_X Y).
$$

From the proof of Lemma~\ref{lemma:2}, we also know that
$$
R_* \nu = \nu \circ R.
$$
Hence,
\begin{align*}
(R^* A)(X,Y) &= A(R_* X, R_* Y) = \langle \nabla_{R_* X} (\nu \circ R), R_* Y \rangle \\
&= \langle \nabla_{R_* X} R_* \nu, R_* Y \rangle 
= \langle R_* (\nabla_X \nu), R_* Y \rangle \\
&= \langle \nabla_X \nu, Y \rangle = A(X,Y),
\end{align*}
which shows that $A$ is invariant under $\psi_t$ for every $t$; equivalently, $\L_\eta A = 0$.

Since both $A$ and the induced metric $g_\Sigma$ are invariant under $\eta$, the mean curvature $H = \tr A = \tr_{g_\Sigma} A$ is also invariant (see the proof of Lemma~\ref{lemma:2}). Using the assumed invariance of $K$,
the invariance of $\chi^+$ and $\theta^+$ follows similarly. 

Now let $f \in C^\infty(\Sigma)$ be invariant under $\eta$, and fix $R = \psi_t$ for some $t$. Since $R$ is an isometry, the curve
$$
\gamma(s) := R(\exp_p(s\,\nu_p))
$$
is a geodesic with initial conditions $\gamma(0) = R(p)$ and $\gamma'(0) = \d R \cdot \nu_p$. Therefore,
$$
\gamma(s) = \exp_{R(p)}(s \, \d R \cdot \nu_p), \quad \forall s.
$$
In particular,
$$
R(\exp_p(f(p)\, \nu_p)) = \exp_{R(p)}(f(p)\, \d R \cdot \nu_p).
$$

If we set $q = R(p)$, then $\d R \cdot \nu_p = \nu_q$, and since $f$ is invariant under $R$, $f(p) = f(q)$. Thus,
$$
R(\exp_p(f(p)\, \nu_p)) = \exp_q(f(q)\, \nu_q) \in \Sigma_f.
$$

This shows that $R(\Sigma_f) \subset \Sigma_f$, and hence $\Sigma_f$ is invariant under the flow $\psi_t$ for every $t$.
\end{proof}

We now present the foliation lemma.

\begin{lemma}[Foliation lemma]\label{lemma:foliation}
Let $(M^3,g,K)$ be an initial data set, and let $\eta$ be a nontrivial Killing vector field on $(M^3,g)$. Assume that $K$ is invariant under $\eta$. If $\Sigma^2$ is a closed axisymmetric MOTS in $(M^3,g,K)$ with $\lambda_1(L)=0$ that is homeomorphic to $S^2$, then there exists a neighborhood $U \cong (-\varepsilon,\varepsilon) \times \Sigma$ of $\Sigma \cong \{0\} \times \Sigma$ in $M$ such that:
\begin{enumerate}
\item[\em (1)] The metric $g$ has the orthogonal decomposition:
$$
g = \phi^2dt^2 + g_t \quad \text{on} \quad U
$$
for some axisymmetric positive function $\phi:U\to\R$, where $g_t$ is the induced metric on $\Sigma_t \cong \{t\} \times \Sigma$.
\item[\em (2)] Each $\Sigma_t$ is an axisymmetric surface in $(M^3,g,K)$ with constant outward null expansion $\theta^+(t)$ with respect to the outward unit normal $\nu_t = \phi^{-1} \partial_t$, where $\nu_0 = \nu$. Furthermore, $\Sigma_t$ is contained in the exterior of $\Sigma$ for each $t\in[0,\varepsilon)$.
\end{enumerate}
\end{lemma}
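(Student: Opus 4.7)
My plan is to apply the implicit function theorem within the category of axisymmetric H\"older functions, adapting the standard construction of constant null expansion foliations so as to preserve axisymmetry and to handle the degeneracy caused by $\lambda_1(L)=0$. By Lemmas~\ref{lemma:1} and~\ref{lemma:2}, the principal eigenfunctions $u>0$ of $L$ and $u^*>0$ of the adjoint $L^*$ are both axisymmetric; by Lemma~\ref{lemma:4}, the normal graph $\Sigma_f=\{\exp_p(f(p)\nu_p)\}$ is axisymmetric whenever $f$ is, and its outward null expansion $\theta^+(\Sigma_f)$ is then an axisymmetric function on $\Sigma$. Because the coefficients of $L$ are axisymmetric (Lemma~\ref{lemma:2}), $L$ itself preserves the subspace of axisymmetric functions.

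I will work in the Banach spaces
$$V=\{f\in C^{2,\alpha}(\Sigma):\L_\eta f=0\},\qquad W=\{h\in C^{0,\alpha}(\Sigma):\L_\eta h=0\},$$
and consider the nonlinear map $\Theta:V\to W$, $\Theta(f):=\theta^+(\Sigma_f)$, defined for $f$ in a small neighborhood of $0$. Its Fr\'echet derivative at $f=0$ is $L:V\to W$; its kernel is $\R u$ (simplicity of $\lambda_1=0$ together with $u\in V$), while $\int_\Sigma (Lf)\,u^*=\int_\Sigma f\,L^*u^*=0$ places its image inside $W_1:=\{h\in W:\int_\Sigma hu^*=0\}$. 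Splitting $V=\R u\oplus V_1$ with $V_1:=\{f\in V:\int_\Sigma fu^*=0\}$, I would verify that $L_1:=L|_{V_1}:V_1\to W_1$ is a topological isomorphism, the surjectivity following from Fredholm theory on $C^{2,\alpha}(\Sigma)$ combined with the axisymmetric projection $P$ introduced earlier in the section, which commutes with $L$.

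Next I would define $\pi:W\to W_1$ by $\pi h:=h-c(h)$, $c(h):=(\int_\Sigma u^*)^{-1}\int_\Sigma hu^*$, so that $\pi\Theta(f)=0$ precisely when $\theta^+(\Sigma_f)$ is constant on $\Sigma$, and set
$$\Phi:(-\delta,\delta)\times V_1\to W_1,\qquad \Phi(t,v):=\pi\Theta(tu+v).$$
Since $\Phi(0,0)=0$ and $\partial_v\Phi(0,0)=L_1$ is an isomorphism, the implicit function theorem furnishes a smooth curve $v:(-\varepsilon,\varepsilon)\to V_1$ with $v(0)=0$ and $\Phi(t,v(t))\equiv 0$; equivalently, $\theta^+(\Sigma_{tu+v(t)})=c(t)$ for some smooth $c$ with $c(0)=0$. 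Differentiating the identity at $t=0$ and using $Lu=0$ gives $L_1v'(0)=-\pi(Lu)=0$, so $v'(0)=0$. Therefore the normal displacement $f(t):=tu+v(t)$ satisfies $\p_t f(0)=u>0$, and for $\varepsilon$ small the map $F(t,p):=\exp_p(f(t)(p)\,\nu_p)$ is a diffeomorphism from $(-\varepsilon,\varepsilon)\times\Sigma$ onto a tubular neighborhood $U$ of $\Sigma$; each leaf $\Sigma_t:=F(\{t\}\times\Sigma)$ is axisymmetric (Lemma~\ref{lemma:4}(2)) with constant outward null expansion $\theta^+(t)=c(t)$, while $f(t)=tu+O(t^2)\ge 0$ for $t\in[0,\varepsilon)$ places $\Sigma_t$ in the exterior of $\Sigma$.

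Finally, for the orthogonal decomposition in item (1), I would let $t:U\to\R$ denote the function whose level sets are the leaves $\Sigma_t$. Then $\nabla t$ is $g$-orthogonal to each $\Sigma_t$, and a standard reparametrization (via the flow of $\nabla t/|\nabla t|^2$) produces adapted coordinates in which $g=\phi^2\,dt^2+g_t$ with $\phi:=1/|\nabla t|$, outward unit normal $\nu_t=\phi^{-1}\p_t$, and $g_t:=g|_{\Sigma_t}$. The function $\phi$ is axisymmetric because $t$ (its level sets being axisymmetric) and $g$ (since $\eta$ is Killing) both are. The principal difficulty is the degenerate linearization: one must quotient out the one-dimensional kernel $\R u$ of $L$ within the axisymmetric category and exploit the identity $Lu=0$ to force $v'(0)=0$, so that the lapse of the foliation is the positive axisymmetric function $u$ rather than a sign-indefinite perturbation; once this is in place, the remaining assertions follow from Lemma~\ref{lemma:4} and standard Riemannian geometry.
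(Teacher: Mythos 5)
Your proposal is correct and follows essentially the same route as the paper: an implicit/inverse function theorem argument in H\"older spaces built on the axisymmetry of the principal eigenfunctions (Lemmas~\ref{lemma:1} and~\ref{lemma:2}), the commutation of the projection $P$ with $L$, the Fredholm alternative weighted by the adjoint eigenfunction $u^*$, the identification of the first-order displacement with a positive multiple of $u$, and the flow of $\nabla t/|\nabla t|^2$ for the orthogonal decomposition. The only difference is packaging: the paper runs the inverse function theorem on the augmented space $C^{2,\alpha}(\Sigma)\times\R$ with the components $P\theta_f^+-k$, $\int_\Sigma Pf$, $P^\perp f$, whereas you restrict to the axisymmetric subspace from the outset and perform the Lyapunov--Schmidt splitting $V=\R u\oplus V_1$ explicitly — an equivalent reduction.
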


\begin{remark}
Although the result is stated for two-dimensional spherical MOTS, it remains valid for  MOTS 
$\Sigma^n$ of arbitrary dimension, assuming that $\eta$ generates an $S^1$-action on 
$\Sigma^n$.
\end{remark}

\begin{proof}
Fix $0<\alpha<1$ and define
$$
B_\delta(0)=\{f \in C^{2,\alpha}(\Sigma); \|f\|_{2,\alpha} < \delta \}.
$$

Given $f \in B_\delta(0)$, consider
$$
\Sigma_f = \{\exp_p(f(p)\,\nu_p); p \in \Sigma\}
$$
and let $\theta_f^+$ denote the outward null expansion of $\Sigma_f$ with respect to the (suitably chosen) outward unit normal $\nu_f$ to $\Sigma_f$.
By taking $\delta>0$ smaller if necessary, we may assume that $\Sigma_f$ is embedded in $M$ for every $f \in B_\delta(0)$.

Define
$$
\Theta : B_\delta(0) \times \mathbb{R} \to P(C^{0,\alpha}(\Sigma)) \times \mathbb{R} \times P^\perp(C^{2,\alpha}(\Sigma))
$$
by
$$
\Theta(f,k) = \Big( P\theta_f^+ - k, \int_\Sigma Pf, P^\perp f \Big),
$$
where
$$
P^\perp f = f - Pf,
$$
and where $P$ is the projection operator introduced above. Note that $P^2 = P$, and hence $P(P^\perp f) = P^\perp(P f)=0$. 

The derivative of $\Theta$ at $(0,0)$, in the direction of $(f,k) \in C^{2,\alpha}(\Sigma) \times \mathbb{R}$, is
$$
\d\Theta_{(0,0)} \cdot (f,k)
= \frac{d}{ds}\Big|_{s=0} \Theta(sf, sk)
= \Big( P(Lf) - k, \int_\Sigma Pf, P^\perp f \Big).
$$

We claim that $\d\Theta_{(0,0)}$ is an isomorphism.

\paragraph{Injectivity.}
Suppose $\d\Theta_{(0,0)} \cdot (f,k) = (0,0,0)$.  
Since $P$ and $L$ commute, we have
$$
L(Pf) = k, \quad \int_\Sigma Pf = 0, \quad f = Pf.
$$
Observe that
$$
k \int_\Sigma u^*
= \int_\Sigma u^* L(Pf)
= \int_\Sigma (Pf)L^*u^*
= 0,
$$
which implies $k=0$. Here $u^*$ is a principal eigenfunction of the formal adjoint $L^*$ associated with the principal eigenvalue $\lambda_1(L^*)=\lambda_1(L)=0$. Therefore, $Pf = c\,u$ for some $c \in \mathbb{R}$, since $\lambda_1(L)=0$ is simple, where $u>0$ is a principal eigenfunction of $L$. 
The condition $\int_\Sigma Pf = 0$ then forces $c=0$, so that $f = Pf = 0$.  
Thus $\d\Theta_{(0,0)}$ is injective.

\paragraph{Surjectivity.}
Let $(v,c,w)$ be an arbitrary element of
$$
P(C^{0,\alpha}(\Sigma)) \times \mathbb{R} \times P^\perp(C^{2,\alpha}(\Sigma)).
$$
Choose $k_0 \in \mathbb{R}$ such that
$$
\int_\Sigma (v + k_0) u^* = 0.
$$
By the Fredholm alternative, there exists $f_0 \in C^{2,\alpha}(\Sigma)$ such that
$$
L f_0 = v+k_0.
$$
Since $P$ and $L$ commute and $v \in P(C^{2,\alpha}(\Sigma))$, we have
$$
L(Pf_0) - k_0 = P v = v.
$$

On the other hand, because $u$ is invariant under $\eta$, we can choose $s_0 \in \mathbb{R}$ such that
$$
\int_\Sigma P(f_0 + s_0 u)
= \int_\Sigma (Pf_0 + s_0 u)
= c.
$$

Finally, since $P w = 0$, it is not difficult to see that the pair $$(Pf_0 + s_0 u + w, k_0)\in C^{2,\alpha}(\Sigma)\times\R$$ satisfies
$$
\d\Theta_{(0,0)} \cdot (Pf_0 + s_0 u + w, k_0) = (v, c, w).
$$
Thus $\d\Theta_{(0,0)}$ is surjective.

\medskip

Then, by the inverse function theorem, there exists a differentiable path
$$
(f(s),k(s)) \in B_\delta(0) \times \mathbb{R}, \quad |s| < \varepsilon,
$$
with $(f(0),k(0)) = (0,0)$, such that
$$
\Theta(f(s),k(s)) = (0,s,0),
$$
that is,
$$
P\theta_{f(s)}^+ = k(s), \quad \int_\Sigma P(f(s)) = s, \quad P(f(s)) = f(s),\quad\forall s.
$$

Since $f(s)$ is invariant under $\eta$, by Lemma~\ref{lemma:4} the same holds for the graph
$\Sigma_s := \Sigma_{f(s)}$ and its outward null expansion
$\theta^+(s) := \theta_{f(s)}^+$. Thus,
$$
\theta^+(s) = k(s), \quad \int_\Sigma f(s) = s.
$$

Similar to the injectivity argument, $L(f'(0)) = k'(0)$ implies $f'(0) = c\,u$ for some $c \in \mathbb{R}$. Since $\int_\Sigma f'(0) = 1$, we have $c > 0$. Therefore, by taking $\varepsilon > 0$ smaller if necessary, the family $\{\Sigma_s\}_{|s| < \varepsilon}$ forms a foliation of a neighborhood $U\cong(-\varepsilon,\varepsilon)\times\Sigma$ of $\Sigma_0 = \Sigma$ by constant outward null expansion surfaces $\Sigma_s\cong\{s\}\times\Sigma$, all of them invariant under $\eta$. Furthermore, $\Sigma_s$ is contained in $M_+$ for $s\in[0,\varepsilon)$.

Moreover, one can introduce coordinates so that, up to isometry,
\begin{align}\label{eq:orthog}
U = (-\varepsilon,\varepsilon) \times \Sigma, \qquad g = \phi^2 dt^2 + g_t \quad \text{on} \quad U,
\end{align}
where $g_t$ is the induced metric on $\Sigma_t = \{t\} \times \Sigma$ and $\phi$ is invariant under $\eta$.

Very briefly, this can be accomplished as follows. Let 
$\tau: U \to (-\varepsilon, \varepsilon)$ be a defining function for the foliation: $\Sigma_s = \{\tau = s\}$. Introduce the vector field 
$$
X = \frac{\nabla\tau}{|\nabla\tau|^2},
$$
which is orthogonal to the leaves $\Sigma_s$, and let $\Phi_t$ be the flow generated by $X$.

Given a coordinate chart $(V,x^1,x^2)$ on $\Sigma^2$, consider the map
$$
\Psi: (-\varepsilon, \varepsilon) \times V \to M^3, \quad 
\Psi(t, x^1, x^2) = \Phi_t(x^1,x^2).
$$
Note that $\d\Psi \cdot \frac{\partial}{\partial t} = X$, so that it is orthogonal to the leaves. 
Further, using that $\d\tau \cdot X = 1$, one sees that the flow $\Phi_t$ preserves the leaf structure 
-- i.e.\ shifts leaves to leaves.

From this, it follows that $\d\Psi \cdot \frac{\partial}{\partial x^i}$, $i= 1,2$, is tangent to the leaves. Hence $\d\Psi \cdot \frac{\partial}{\partial t}$ and $\d\Psi \cdot \frac{\partial}{\partial x^i}$ are orthogonal. By pulling back the metric via $\Psi$, we obtain the desired structure \eqref{eq:orthog} 
with respect to the coordinate neighborhood~$V$. This can be globalized by observing that the coordinate $t$ has an invariant meaning.

Finally, with the metric in the form \eqref{eq:orthog}, using 
$\L_\eta t = 0$ and basic properties of Lie derivatives, we obtain
$$
0 = \L_\eta g = (\L_\eta\phi^2)\, dt \otimes dt + \L_\eta g_t 
= 2\phi\, (\L_\eta\phi)\, dt \otimes dt,
$$
which shows that $\phi$ is invariant under $\eta$.
\end{proof}

\section{Main result: Local splitting}\label{sec:main}

In this section, we state and prove the main result of this work. Before doing so, we recall some important definitions. 

A closed surface $\Sigma'$ in an initial data set $(M^3,g,K)$ is said to be \emph{outer trapped} if $\theta^+<0$ on $\Sigma'$. A separating MOTS $\Sigma$ is \emph{weakly outermost} if there exists no outer trapped surface $\Sigma'$ that is homologous to $\Sigma$ and lies entirely in the exterior region $M_+$ of $\Sigma$. $\Sigma$ is said to be \emph{locally weakly outermost} if there exists a neighborhood $U \subset M$, $\Sigma \subset U$, such that $\Sigma$ is weakly outermost in $U$. Finally, $\Sigma$ is said to be \emph{outer area-minimizing} if $|\Sigma|\le|\Sigma'|$ for every surface $\Sigma'$ in $M_+$ that is homologous to $\Sigma$.

\begin{theorem}\label{thm:main}
Let $(M^3,g,K)$ be an initial data set satisfying $\mu-|J|\ge c$ for some positive constant $c$. Let $\eta$ be a nontrivial Killing vector field on $(M^3,g)$, and assume that $K$ is invariant under $\eta$. Consider an axisymmetric, closed, weakly outermost MOTS $\Sigma^2$ in $(M^3,g,K)$. Then $\Sigma^2$ is topologically a two-sphere and its area satisfies
\begin{align}\label{eq:main}
|\Sigma|\le\frac{4\pi}{c+\omega},
\end{align}
where
\begin{align*}
\omega:=\frac{1}{|\Sigma|}\int_{\Sigma\setminus\{\eta=0\}}|X^\eta|^2
\end{align*}
and 
\begin{align*}
X^\eta:=\frac{\langle X,\eta\rangle}{\langle\eta,\eta\rangle}\,\eta
\end{align*}
is the projection of $X$ onto $\eta$.

If equality holds in \eqref{eq:main} and, in addition, $\Sigma^2$ is outer area-minimizing and minimizes $\omega$, then the following hold:

\begin{enumerate}
\item[\em (1)]  There exists an outer neighborhood $U$ of $\Sigma$ such that, up to isometry, 
\begin{align*}
g=dt^2+g_\Sigma \quad \text{on} \quad U = [0,\epsilon) \times \Sigma,
\end{align*}
where $g_\Sigma$ is the round metric on $\Sigma^2$ of constant Gaussian curvature\linebreak $\kappa_\Sigma=c$, 
\item[\em (2)] $K = \alpha\, dt \otimes dt$
for some function $\alpha = \alpha(t)$,
\item[\em (3)]  $\mu = c$ and $J = 0$ on $U$, and 
\item[\em (4)]  for each $t \in [0,\epsilon)$, $\Sigma_t = \{t\} \times \Sigma$ is invariant under $\eta$, $\chi_t^{\pm} = 0$, $\omega_t = 0$ (hence $\Sigma_t$ has zero angular momentum), and $|\Sigma_t| = 4\pi/c$.
\end{enumerate}
\end{theorem}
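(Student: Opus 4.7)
The plan is to follow the local-splitting strategy of \cite{GM2018}, using the axisymmetric infinitesimal rigidity (Proposition \ref{prop:infinitesimal}) as the seed and the foliation lemma (Lemma \ref{lemma:foliation}) as the mechanism for propagating rigidity into a neighborhood. First, since $\mu - |J| \geq c$ implies $\mu + J(\nu) \geq c$ and a closed weakly outermost MOTS is stable, Proposition \ref{prop:infinitesimal} applies to $\Sigma$ directly, giving the topology (sphere) and the area bound \eqref{eq:main}. Under equality, Proposition \ref{prop:infinitesimal} further supplies $\chi^+ = 0$, $\mu + J(\nu) \equiv c$, $\kappa_\Sigma = c + |X^\eta|^2$, and $\lambda_1(L) = 0$. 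Because $\lambda_1(L) = 0$, Lemma \ref{lemma:foliation} then produces an outer neighborhood $U \cong [0,\varepsilon) \times \Sigma$ with $g = \phi^2 dt^2 + g_t$, $\phi$ axisymmetric, each leaf $\Sigma_t$ axisymmetric, and $\theta^+(t)$ constant with $\theta^+(0) = 0$.

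The heart of the argument is to show that each leaf also saturates the area bound. The weakly outermost hypothesis forbids $\theta^+(t) < 0$ in the exterior (otherwise $\Sigma_t$ would be an outer trapped surface homologous to $\Sigma$ in $M_+$), so $\theta^+(t) \geq 0$ for $t \in [0,\varepsilon)$. The outer area-minimizing and $\omega$-minimizing hypotheses give $|\Sigma_t| \geq |\Sigma|$ and $\omega_t \geq \omega$, so
\begin{align*}
(c + \omega_t)|\Sigma_t| \geq (c+\omega)|\Sigma| = 4\pi.
\end{align*}
For the reverse inequality I would run the Proposition \ref{prop:infinitesimal} argument (i.e., Proposition \ref{prop:stability} with $f \equiv 1$ followed by Gauss--Bonnet) on each leaf $\Sigma_t$; this requires $\Sigma_t$ to be a stable MOTS. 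To verify MOTS-ness and stability for $t > 0$, I would use that $L\phi_0 = \theta^+{}'(0)$ is constant on $\Sigma$, pair it with the principal eigenfunction $u^*$ of $L^*$, and invoke $\lambda_1(L^*) = \lambda_1(L) = 0$ to deduce $\theta^+{}'(0) = 0$. A parallel argument along the foliation, together with $\theta^+(t) \geq 0$ and the pinched inequalities above, should force $\theta^+(t) \equiv 0$ and $\lambda_1(L_t) \equiv 0$ throughout $[0,\varepsilon)$.

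Once every leaf is a stable axisymmetric MOTS saturating \eqref{eq:main}, the rigidity of Proposition \ref{prop:infinitesimal} propagates pointwise in $t$: $\chi^+_t \equiv 0$, $\mu + J(\nu_t) \equiv c$, and $\kappa_{\Sigma_t} = c + |X^\eta_t|^2$ on each leaf. From $L_t \phi_t \equiv 0$ with $\phi_t > 0$ axisymmetric, $\phi_t$ is proportional to the principal eigenfunction of $L_t$, and after reparametrizing $t$ one obtains $\phi \equiv 1$, giving $g = dt^2 + g_t$ on $U$; the rigidity then forces $g_t$ to be independent of $t$. Combining $\chi^+_t = 0$ with the product structure yields $A_t = 0$ and $K|_{\Sigma_t} = 0$, hence $X^\eta_t = 0$, $\omega_t = 0$, and $\kappa_{\Sigma_t} = c$ (round metric), so $|\Sigma_t| = 4\pi/c$; one further concludes $K = \alpha(t)\,dt \otimes dt$ and, via the constraint equations together with $\mu = c$, that $J \equiv 0$. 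The main obstacle I anticipate is the pinching step that upgrades $\theta^+(t) \geq 0$ to $\theta^+(t) \equiv 0$ and controls $\lambda_1(L_t)$ along the foliation; this is precisely where the three hypotheses (weakly outermost, outer area-minimizing, and $\omega$-minimizing) must act in concert, and where the continuity of the principal eigenvalue in $t$ requires the most care.
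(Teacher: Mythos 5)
Your overall architecture (Proposition~\ref{prop:infinitesimal} for the bound and the infinitesimal rigidity, then Lemma~\ref{lemma:foliation} to propagate) matches the paper, but the two places you flag as delicate are exactly where the proposal has genuine gaps. First, the pinching step. Your mechanism for showing $\theta^+(t)\equiv 0$ --- pairing $L\phi_0=\theta^+{}'(0)$ with $u^*$ and invoking $\lambda_1(L^*)=0$ --- only works at $t=0$, where $\Sigma_0$ is a MOTS with vanishing principal eigenvalue; it gives first-order information and nothing more. For $t>0$ the leaves are not known to be MOTS, the variation formula acquires the extra terms $(\theta\tau-\tfrac12\theta^2)\phi$, and ``running Proposition~\ref{prop:infinitesimal} on each leaf'' is circular, since that proposition requires the leaf to be a stable MOTS. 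The paper closes this gap with a Gronwall-type differential inequality valid for \emph{every} $t$ without any MOTS or stability assumption on the leaf: dividing the first variation by $\phi$, completing the square with $Y=X^\perp-\nabla\ln\phi$, integrating over the punctured leaf as in Proposition~\ref{prop:stability}, and applying Gauss--Bonnet together with all three hypotheses yields
\begin{align*}
\theta'\int_{\Sigma_t}\tfrac{1}{\phi}-\theta\int_{\Sigma_t}\tau\le 4\pi-(c+\omega_t)|\Sigma_t|\le 0,
\end{align*}
whence $\bigl(\theta(t)e^{-\int_0^t\xi}\bigr)'\le 0$ and $\theta(t)\le\theta(0)=0$; weakly outermost then forces $\theta(t)=0$. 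Without this (or an equivalent) differential inequality, your ``parallel argument along the foliation'' does not go through.

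Second, the endgame contains an incorrect implication: $K|_{\Sigma_t}=0$ does \emph{not} give $X^\eta=0$, because $X$ is dual to the \emph{mixed} components $K(\nu,\cdot)|_\Sigma$, not to the tangential restriction of $K$. After one shows $K=\alpha\,dt\otimes dt+\beta\,(dt\otimes d\phi+d\phi\otimes dt)$, killing $\beta$ (equivalently $X^\eta$, hence $\omega=0$ and $\kappa_\Sigma=c$) requires a separate variational argument --- Lemma~\ref{lemma:beta}, which compares $\omega(\Sigma_f)$ for graphs $t=f(\theta)$ and uses the $\omega$-minimizing hypothesis a second time. Relatedly, you skip the steps that actually produce $A_t=0$ and $K|_{\Sigma_t}=0$: one needs $\chi_t^-=0$ as well as $\chi_t^+=0$, and in the paper this comes from first using outer area-minimization to conclude each leaf is minimal (hence a MITS), then integrating the first variation of $\theta^-$ to get $\chi_t^-\equiv0$, $J\equiv0$, and $\nabla\phi\equiv0$ (which is also what makes $\phi=\phi(t)$; note that $Y=0$ only gives $\nabla\ln\phi=X^\perp$, not $\nabla\phi=0$). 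These omissions mean conclusions (1)--(4) are not yet established by the proposal as written.
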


\begin{lemma}\label{lemma:beta}
Consider the metric
$$
g = dt^2 + d\theta^2 + \rho^2(\theta)\, d\phi^2 \quad \text{on} \quad M^3 = \mathbb{R} \times S^2,
$$
with Killing vector field $\eta = \partial_\phi$, and let $K$ be a symmetric $(0,2)$-tensor on $M^3$ of the form
$$
K = \alpha(t)\, dt \otimes dt + \beta(\theta)\,(dt \otimes d\phi + d\phi \otimes dt).
$$
If the slice $\Sigma_0 = \{0\}\times S^2$ minimizes $\omega$, then $\beta \equiv 0$.
\end{lemma}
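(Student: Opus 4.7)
The plan is a direct variational comparison against the minimization hypothesis, using axisymmetric graphs over $\Sigma_0 = \{0\}\times S^2$.

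First I would compute $\omega$ on $\Sigma_0$. With outward unit normal $\nu = \partial_t$, the 1-form $K(\nu,\cdot)|_{\Sigma_0}$ reduces to $\beta(\theta)\,d\phi$; its metric dual is $X = (\beta/\rho^2)\,\partial_\phi$, which already lies along $\eta$, so $X^\eta = X$ and $|X^\eta|^2 = \beta^2/\rho^2$. Using the area element $\rho\,d\theta\,d\phi$, this yields
$$\omega_0\cdot|\Sigma_0| \;=\; 2\pi\int_0^\pi \frac{\beta(\theta)^2}{\rho(\theta)}\,d\theta, \qquad |\Sigma_0| \;=\; 2\pi\int_0^\pi \rho(\theta)\,d\theta.$$

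Next, for a sufficiently small smooth axisymmetric $f = f(\theta)$ on $S^2$, I would introduce the graphical perturbation $\Sigma_f = \{t = f(\theta)\}$, which is $\eta$-invariant by Lemma~\ref{lemma:4}. With unit outward normal $\nu_f = (1+(f')^2)^{-1/2}(\partial_t - f'\partial_\theta)$ and induced metric $(1+(f')^2)\,d\theta^2 + \rho^2\,d\phi^2$, the same computation --- noting that the $\alpha(t)\,dt\otimes dt$ piece of $K$ contributes only a $\partial_\theta$-term to $X$, which is orthogonal to $\eta$ --- produces
$$\omega_f\cdot|\Sigma_f| \;=\; 2\pi\int_0^\pi \frac{\beta^2}{\rho\sqrt{1+(f')^2}}\,d\theta, \qquad |\Sigma_f| \;=\; 2\pi\int_0^\pi \rho\sqrt{1+(f')^2}\,d\theta.$$
Setting $f = s\phi(\theta)$ for a fixed smooth axisymmetric $\phi$ with $\phi'\not\equiv 0$ (for instance $\phi(\theta)=\cos\theta$) and expanding to second order in $s$ gives
$$\omega_s - \omega_0 \;=\; -\,\frac{\pi s^2}{|\Sigma_0|}\int_0^\pi\!\Big(\frac{\beta(\theta)^2}{\rho(\theta)} + \omega_0\,\rho(\theta)\Big)(\phi'(\theta))^2\,d\theta \;+\; O(s^4).$$
The coefficient of $s^2$ is manifestly non-positive, while the minimization hypothesis $\omega_s \geq \omega_0$ forces it to be non-negative. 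Hence the integrand vanishes; in particular $\omega_0\int_0^\pi \rho(\phi')^2\,d\theta = 0$, and since $\rho > 0$ on $(0,\pi)$ and $\phi'\not\equiv 0$ we conclude $\omega_0 = 0$. Returning to the first formula, $\int_0^\pi \beta^2/\rho\,d\theta = 0$, and therefore $\beta\equiv 0$.

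The main point requiring care is admissibility: one needs the minimization hypothesis in Theorem~\ref{thm:main} to be interpreted with respect to a class of competitors that includes the small axisymmetric normal graphs $\Sigma_{s\phi}$ over $\Sigma_0$ lying in $M_+$. Once this is granted, the rest of the argument reduces to the two explicit formulas above and the sign of the quadratic variation in $s$.
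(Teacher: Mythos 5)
Your argument is correct and is essentially the paper's own proof: both use axisymmetric graphs $t=f(\theta)$ as competitors and rest on the same computation that $\int_{\Sigma_f}|X^{\partial_\phi}|^2$ picks up a factor $1/\sqrt{1+(f')^2}$ while the area element picks up $\sqrt{1+(f')^2}$, so $\omega$ can only decrease under such a deformation. The only cosmetic difference is that you extract the conclusion from the sign of the second-order term in $s$, whereas the paper argues directly that if $\beta\not\equiv 0$ a single suitable graph makes $\omega$ strictly smaller.
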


\begin{proof}
Let $\Sigma_f$ denote the graph $t = f(\theta)$. We aim to estimate the quantity
$$
\omega(\Sigma_f)=\frac{1}{|\Sigma_f|} \int_{\Sigma_f} |X^{\partial_\phi}|^2.
$$

First, observe that the unit normal to $\Sigma_f$ is
$$
\nu_f = \frac{\partial_t - f'(\theta)\partial_\theta}{\sqrt{1 + f'(\theta)^2}}.
$$
Hence,
$$
|X^{\partial_\phi}|^2 = \frac{K(\nu_f, \partial_\phi)^2}{\langle \partial_\phi, \partial_\phi \rangle} 
= \frac{\beta^2(\theta)}{\rho^2(\theta)(1 + f'(\theta)^2)}.
$$

On the other hand, the area element of $\Sigma_f$ is
$$
dA_f = \sqrt{1 + f'(\theta)^2}\,dA_0,
$$
where $dA_0 = \rho(\theta)\, d\theta\, d\phi$ is the area element of the slice $\Sigma_0 : t = 0$. Therefore,
$$
\begin{aligned}
\int_{\Sigma_f} |X^{\partial_\phi}|^2 
&= \int_{\Sigma_0} \frac{\beta^2(\theta)}{\rho^2(\theta) (1 + f'(\theta)^2)} \sqrt{1 + f'(\theta)^2}\, dA_0 \\
&= \int_{\Sigma_0} \frac{\beta^2(\theta)}{\rho^2(\theta)  \sqrt{1 + f'(\theta)^2}}\, dA_0 \\
&\le \int_{\Sigma_0} \frac{\beta^2(\theta)}{\rho^2(\theta)}\, dA_0 
= \int_{\Sigma_0} |X^{\partial_\phi}|^2.
\end{aligned}
$$

In particular, if $\beta \not\equiv 0$, one can choose a graph $\Sigma_f$ such that
$$
\int_{\Sigma_f} |X^{\partial_\phi}|^2 < \int_{\Sigma_0} |X^{\partial_\phi}|^2,
$$
and thus
$$\omega(\Sigma_f)=\frac{1}{|\Sigma_f|}\int_{\Sigma_f} |X^{\partial_\phi}|^2<\frac{1}{|\Sigma_0|}\int_{\Sigma_0} |X^{\partial_\phi}|^2=\omega(\Sigma_0),$$
since $|\Sigma_0|\le|\Sigma_f|$ for every $f=f(\theta)$.
\end{proof}

\begin{proof}[Proof of Theorem \ref{thm:main}]
Since $\Sigma$ is weakly outermost -- and hence stable -- and $\mu+J(\nu)\ge\mu-|J|\ge c$ along $\Sigma$, it follows from Proposition~\ref{prop:infinitesimal} that
$$
|\Sigma|\le\frac{4\pi}{c+\omega}.
$$
Moreover, if equality holds, then $\chi^+$ vanishes, $\mu+J(\nu)=\mu-|J|\equiv c$, and $\kappa_\Sigma=c+|X^\eta|^2$ along $\Sigma$. Also, the principal eigenvalue $\lambda_1(L)$ of $L$ equals zero. 

Therefore, by the Foliation lemma (Lemma~\ref{lemma:foliation}), there exists an outer neighborhood $U\cong[0,\varepsilon)\times\Sigma$ of $\Sigma\cong\{0\}\times\Sigma$ in $M$ such that
$$
g=\phi^2dt^2+g_t\quad\text{on}\quad U,
$$
and each $\Sigma_t\cong\{t\}\times\Sigma$ is an axisymmetric surface in $(M,g,K)$ with constant outward null expansion $\theta^+(t)$ with respect to the unit normal $\nu_t=\phi^{-1}\partial_t$. Furthermore, the function $\phi:U\to\mathbb{R}$ is axisymmetric.

The first variation of $\theta(t)=\theta^+(t)$ is given by
$$
\theta'=-\Delta\phi+2\langle X,\nabla\phi\rangle+\Big(Q+\div X-|X|^2+\theta\tau-\frac{1}{2}\theta^2\Big)\phi.
$$
Thus,
\begin{align}
\frac{\theta'}{\phi}-\theta\tau
&=-\frac{\Delta\phi}{\phi}+2\langle X^\perp+X^\eta,\nabla\ln\phi\rangle+Q \nonumber\\
&\quad\quad +\div X^\perp+\div X^\eta-|X^\perp|^2-|X^\eta|^2-\frac{1}{2}\theta^2 \nonumber\\
&=-|Y|^2+\div Y+Q+\div X^\eta-|X^\eta|^2-\frac{1}{2}\theta^2\label{eq:aux4}\\
&\le Q+\div(Y+X^\eta)-|X^\eta|^2,\nonumber
\end{align}
where $Y=X^\perp-\nabla\ln\phi$. Above we used that $\langle X^\eta,\nabla\ln\phi\rangle=0$, since $\phi$ is axisymmetric on $\Sigma_t$. Recall that
$$
X^\eta:=\frac{\langle X,\eta\rangle}{\langle\eta,\eta\rangle}\,\eta
$$
is the projection of $X$ onto $\eta$ and
$$
X^\perp:=X-X^\eta.
$$

Using the same argument as in the proof of Proposition~\ref{prop:stability}, we obtain
\begin{align*}
\theta'\int_{\Sigma_t}\frac{1}{\phi}-\theta\int_{\Sigma_t}\tau
&\le\int_{\Sigma_t}Q-\int_{\Sigma_t\setminus\{\eta=0\}}|X^\eta|^2 \\
&=\int_{\Sigma_t}\Big(\kappa_{\Sigma_t}-(\mu+J(\nu_t))-\frac{1}{2}|\chi_t^+|^2\Big)-\omega_t|\Sigma_t| \\
&\le\int_{\Sigma_t}(\kappa_{\Sigma_t}-c)-\omega_t|\Sigma_t| \\
&=4\pi-(c+\omega_t)|\Sigma_t|,
\end{align*}
where
$$
\omega_t:=\frac{1}{|\Sigma_t|}\int_{\Sigma_t\setminus\{\eta=0\}}|X^\eta|^2.
$$
Above we used that $\mu+J(\nu_t)\ge\mu-|J|\ge c$ along $\Sigma_t$ for each $t$.

Because we are assuming that $(c+\omega)|\Sigma|=4\pi$ and that $\Sigma$ is an outer area-minimizing surface (in particular, $|\Sigma|\le|\Sigma_t|$), we have
$$
\theta'\int_{\Sigma_t}\frac{1}{\phi}-\theta\int_{\Sigma_t}\tau
\le(c+\omega)|\Sigma|-(c+\omega_t)|\Sigma_t|
\le(\omega-\omega_t)|\Sigma|.
$$

Also, because $\omega\le\omega_t$ for every $t\in[0,\varepsilon)$, since we are assuming that $\Sigma$ minimizes $\omega$, we have
\begin{align*}
\theta'\int_{\Sigma_t}\frac{1}{\phi}-\theta\int_{\Sigma_t}\tau\le0,
\end{align*}
which is equivalent to
\begin{align*}
\Big(\theta(t)e^{-\int_0^t\xi(s)\,ds}\Big)'\le0,
\end{align*}
where
\begin{align*}
\xi(t):=\frac{\int_{\Sigma_t}\tau}{\int_{\Sigma_t}\frac{1}{\phi}}.
\end{align*}

Therefore,
\begin{align*}
\theta(t)e^{-\int_0^t\xi(s)\,ds}\le\theta(0)=0,
\end{align*}
that is, $\theta(t)\le0$ for every $t\in[0,\varepsilon)$. Since $\theta(t)$ is constant on $\Sigma_t$, and $\Sigma$ is weakly outermost, we conclude that $\theta(t)=0$ for each $t$, i.e.\ each $\Sigma_t$ is a MOTS. This gives that all inequalities above must be equalities. In particular, for each~$t$, we have:
\begin{itemize}
\item $|\Sigma_t|=|\Sigma|$ and $\omega_t=\omega$; as a corollary,
\begin{align}\label{eq:aux3}
|\Sigma_t|=\frac{4\pi}{c+\omega_t}.
\end{align}
\item $\chi_t^+\equiv0$ and $\mu+J(\nu_t)=\mu-|J|\equiv c$ on $\Sigma_t$.
\item $Y=X^\perp-\nabla\ln\phi\equiv0$ along $\Sigma_t$.
\end{itemize}

Since $\Sigma$ is weakly outermost, and $\Sigma_t\subset M_+$ is homologous to $\Sigma$, we obtain that $\Sigma_t$ is also weakly outermost -- and hence stable. Thus, as a consequence of Proposition~\ref{prop:infinitesimal} and equation \eqref{eq:aux3}, we have $\kappa_{\Sigma_t}=c+|X^\eta|^2$ along $\Sigma_t$. Therefore, it follows from \eqref{eq:aux4} that, on $\Sigma_t$,
\begin{align*}
\div X^\eta&=|X^\eta|^2-Q=|X^\eta|^2-\kappa_{\Sigma_t}+(\mu+J(\nu_t))+\frac{1}{2}|\chi_t^+|^2\\
&=-c+c=0.
\end{align*}
Moreover, since $\Sigma$ is outer area-minimizing and $|\Sigma_t|=|\Sigma|$, we obtain that $\Sigma_t$ is locally area-minimizing for each $t\in(0,\varepsilon)$. In particular, $\Sigma_t$ is minimal. Because $\theta^+(t)=\theta(t)\equiv0$, we conclude that $\tr_{\Sigma_t}K\equiv0$ for each $t$. 

This gives that each $\Sigma_t$ is a \textit{marginally inner trapped surface} (MITS): $\theta^-\equiv0$. Then, taking the first variation of $\theta^-(t)$, with $\phi^-=-\phi$, we have:
\begin{align*}
0=\frac{d\theta^-}{dt}&=-\Delta\phi^-+2\langle X^-,\nabla\phi^-\rangle+(Q^--|X^-|^2+\div X^-)\phi^-,
\end{align*}
where $X^-$ is the vector field dual to the $1$-form $K(-\nu_t,\cdot)=-K(\nu_t,\cdot)$ on $\Sigma_t$, that is, $X^-=-X$, and
\begin{align*}
Q^-=\kappa_{\Sigma_t}-(\mu+J(-\nu_t))-\frac{1}{2}|\chi_t^-|^2=|X^\eta|^2-2|J|-\frac{1}{2}|\chi_t^-|^2,
\end{align*}
where above we used that $\kappa_{\Sigma_t}=c+|X^\eta|^2$, $\mu=c+|J|$, and $J(\nu_t)=-|J|$ on~$\Sigma_t$. Therefore,
\begin{align*}
0&=\Delta\phi+2\langle X,\nabla\phi\rangle-Q^-\phi+|X|^2\phi+\phi\div X\\
&=\Delta\phi+2\langle X^\perp,\nabla\phi\rangle+\Big(2|J|+\frac{1}{2}|\chi_t^-|^2\Big)\phi+(|X|^2-|X^\eta|^2)\phi+\phi\div X,
\end{align*}
since $\langle X^\eta,\nabla\phi\rangle=0$. Using that $X^\perp=\nabla\ln\phi$, $|X|^2=|X^\eta|^2+|X^\perp|^2$, and $\div X^\eta=0$, we get
\begin{align}\label{eq:aux5}
0=\Delta\phi+\frac{|\nabla\phi|^2}{\phi}+\Big(|J|+\frac{1}{4}|\chi_t^-|^2\Big)\phi
\end{align}
on $\Sigma_t$ for each $t$.

Integrating \eqref{eq:aux5} over $\Sigma_t$ gives that $\chi_t^-\equiv0$, $J\equiv 0$, and $\nabla\phi\equiv0$ on $\Sigma_t$. In particular, $\phi=\phi(t)$ depends only on $t$. Furthermore, $\chi_t^+\equiv0$ and $\chi_t^-\equiv0$ imply that $A_t\equiv0$ and $K|_{\Sigma_t}\equiv0$, where $A_t$ is the second fundamental form of~$\Sigma_t$.

Finally, we have the following conclusions:
\begin{itemize}
\item $A_t\equiv0$ for every $t\in[0,\varepsilon)$ implies that $g_t$ does not depend on $t$. Moreover, making a change of variable if necessary, we may assume that $\phi\equiv 1$. Therefore,
\begin{align*}
g=dt^2+g_\Sigma\quad\text{on}\quad U,
\end{align*}
where $g_\Sigma$ -- the induced metric on $\Sigma$ -- has Gaussian curvature
\begin{align*}
\kappa_\Sigma=c+|X^\eta|^2.
\end{align*}
\item Using the coordinate system \eqref{eq:coord_sys} on $\Sigma\cong\{0\}\times\Sigma$, we have
\begin{align*}
g=dt^2+d\theta^2+\rho^2(\theta)\,d\phi^2\quad\text{on}\quad U.
\end{align*}
Since $\eta$ is tangent to the $t$-slices, we can write
\begin{align*}
\eta=a(t,\theta,\phi)\,\p_\theta+b(t,\theta,\phi)\,\p_\phi,
\end{align*}
with $a(0,\theta,\phi)=0$ and $b(0,\theta,\phi)=1$, because $\eta=\p_\phi$ on $\Sigma$.

On the other hand, it is not difficult to see that
\begin{align*}
(\L_\eta g)_{t\theta}=\p_ta\quad\text{and}\quad(\L_\eta g)_{t\phi}=(\p_tb)\rho^2.
\end{align*}
Thus, because $\L_\eta g=0$, we have $\p_ta=\p_tb=0$, i.e.\ $a$ and $b$ does not depend on $t$. This implies that $a=a(0,\theta,\phi)=0$ and $b=b(0,\theta,\phi)=1$, that is, $\eta=\p_\phi$ on $U$.
\item We know that $X-X^\eta=X^\perp=\nabla\ln\phi\equiv 0$, that is, $X=X^\eta$, which is equivalent to $K(\p_t,\p_\theta)=\langle X,\p_\theta\rangle=0$. Moreover, $K|_{\Sigma_t}\equiv0$ for each $t$. Therefore,
\begin{align*}
K=\alpha(t,\theta,\phi)\,dt\otimes dt+\beta(t,\theta,\phi)\,(dt\otimes d\phi+d\phi\otimes dt).
\end{align*} 

On the other hand, 
\begin{align*}
\partial_\phi\alpha=(\L_{\p_\phi} K)_{tt}=0\quad\text{and}\quad\p_\phi\beta=(\L_{\p_\phi} K)_{t\phi}=0.
\end{align*}
Thus $\alpha=\alpha(t,\theta)$ and $\beta=\beta(t,\theta)$ do not depend on $\phi$. Now observe that, in this coordinate system,
\begin{align*}
|X^\eta|^2=\frac{K(\p_t,\p_\phi)^2}{\langle\p_\phi,\p_\phi\rangle}=\frac{\beta^2}{\rho^2}.
\end{align*}
Furthermore, the Gaussian curvature of $g_\Sigma=d\theta^2+\rho^2(\theta)\,d\phi^2$ is given by 
\begin{align*}
\kappa_\Sigma=-\frac{\rho''}{\rho}.
\end{align*}
Therefore, since $\kappa_{\Sigma_t}=c+|X^\eta|^2$ for each $t$, we obtain that
\begin{align*}
-\frac{\rho''(\theta)}{\rho(\theta)}=c+\frac{\beta^2(t,\theta)}{\rho^2(\theta)},
\end{align*}
or, equivalently,
\begin{align*}
-\rho''(\theta)\rho(\theta)-c\,\rho^2(\theta)=\beta^2(t,\theta).
\end{align*}
Since the left-hand side of the last equation does not depend on $t$, the same holds for $\beta=\beta(\theta)$.

Finally, we know that 
\begin{align*}
\div K-\d\tr K=\div(K-(\tr K)g)=J=0.
\end{align*}
Moreover, straightforward computations give that $(\div K)(\p_\theta)=0$. Thus
\begin{align*}
\p_\theta\alpha=\d\tr K\cdot\p_\theta=(\div K)(\p_\theta)=0,
\end{align*}
that is, $\alpha=\alpha(t)$ depends only on $t$. Lemma \ref{lemma:beta} then implies that $\beta\equiv0$, and so $\kappa_\Sigma\equiv c$.  This then implies that  $X^{\eta} = 0$ and $\omega = 0$.
\end{itemize}
\end{proof}

\section{Nariai spacetime with rotation}\label{sec:Nariai}

In this section, we consider some aspects of the rotating Nariai spacetime pertaining to some of our earlier results. This spacetime is obtained as a certain limit of the Kerr-de Sitter spacetime, in which the event and cosmological horizons coalesce. It is a vacuum solution to the Einstein equations with a positive cosmological constant $\Lambda$ that depends on a rotation parameter $a$. By setting $a=0$, one obtains the standard Nariai solution, which is the product of the two-dimensional de Sitter space and the two-sphere. For further details, see in particular \cite{AA} and \cite{AH}, from which we will be referencing a number of formulas.

The rotating Nariai spacetime has topology $\R \times S^1 \times S^2$, with metric in global coordinates given by \cite[eq.~(2.21)]{AA},
$$
\bar{g} = \Gamma(\th)(-d\tau^2 + \cosh^2\tau d\psi^2 + \alpha(\th)d\th^2)
+ \gamma(\th) (d\phi^2 - k \sinh\tau d\psi)^2,
$$
where $\tau \in (-\infty,\infty)$, $\psi \in [0, 2\pi]$ ($0 \sim 2\pi$), and $\Gamma(\th)$, $\alpha(\th)$, 
$\gamma(\th)$, and $k$ are defined in \cite[eq.~(2.19)]{AA}.\footnote{There is a small typo in the expression for 
$\gamma(\th)$, with the correct version given in \cite{AH}.}

Now consider the coordinate sphere $\Sigma: \psi = \pi$ in the spacelike hypersurface $M: \tau =0$. The area element $dA$ of $\Sigma$ is given by
$$
dA = \sqrt{\Gamma(\th)\alpha(\th)\gamma(\th)}\,d\phi\,d\th.
$$

\smallskip
Substituting the expressions for $\Gamma(\th)$, $\alpha(\th)$, and $\gamma(\th)$, we obtain
\begin{align}\label{eq:dA}
dA = \frac{r_c^2 + a^2}{1 +\frac{\Lambda a^2}{3}}\sin\th\,d\phi\,d\th,
\quad 0 \le \phi \le 2\pi,\quad 0 \le \th \le \pi,
\end{align}
and hence
\begin{align}\label{eq:area}
A(\Sigma)=4\pi\frac{r_c^2 + a^2}{1+\frac{\Lambda a^2}{3}}.
\end{align}

As part of the limiting process to obtain the rotating Nariai spacetime, $r_c$ must be a double root of the equation
$$
\Delta_r  = 0,
$$
where $\Delta_r$ is the polynomial expression in \cite[eq.~(2.3)]{AA}. It is obtained by simultaneously solving
$$
\Delta_r  = 0 \quad \text{and} \quad  \Delta_r'  = 0,
$$
which results in
\begin{align}\label{eq:rc}
{r_c^2  = \frac{3/\Lambda - a^2 + \sqrt{(3/\Lambda - a^2)^2 - 12 a^2 \cdot 3/\Lambda}}{6}}.
\end{align}

Note that, in the limit $a \to 0$, $r_c^2$ becomes
$$
r_c^2 = \frac{1}{\Lambda},
$$
and hence 
$$
A(\Sigma) = \frac{4\pi}{\Lambda} ,
$$
as is consistent with the discussion of the standard Nariai spacetime in \cite{GM2018,GM2025}.  For $a \ne 0$, one can see that 
$A(\Sigma) < 4\pi/\Lambda$. However, we would like to compare 
$A(\Sigma)$ and
$4\pi/(\Lambda + \omega)$, the area bound in \eqref{eq:main} with $c = \Lambda$.

Note that \eqref{eq:dA} and \eqref{eq:area} imply 
$$
dA = \frac{|\Sigma|}{4\pi}\sin\th\,d\phi\,d\th.
$$ 
Using this in \eqref{eq:omega}, gives
\begin{align}\label{eq:omega2}
\omega = \frac{1}{4\pi} \int_{\Sigma} |X^{\p_{\phi}}|^2 d\Omega,  \quad d\Omega = \sin\th\,d\phi\,d\theta.
\end{align}

The integrand above is given by
\[
|X^{\p_{\phi}}|^2 = \frac{[K(\nu,\p_{\phi})]^2}{\langle\p_{\phi},\p_{\phi}\rangle},
\]
where $\nu = \frac{1}{\sqrt{\Gamma(\th)}}\p_{\psi}$ is the outward unit normal to 
$\Sigma$ in $M$. Now, 
\begin{align*}
K(\nu, \p_{\phi}) &= \frac{1}{\sqrt{\Gamma(\th)}}\,K(\p_{\psi},\p_{\phi}) \\
&= \frac{1}{\sqrt{\Gamma(\th)}}\,\frac{1}{2\sqrt{\Gamma(\th)}}\,\p_{\tau}
\langle\p_{\psi},\p_{\phi}\rangle\big|_{\tau = 0}\\
&= \frac{1}{2\Gamma(\th)}\, \p_{\tau} (-\gamma(\th) k\sinh\tau)\big|_{\tau = 0}= -\frac{\gamma(\th) k}{2\Gamma(\th)}.
\end{align*}

Using $\langle\p_{\phi},\p_{\phi}\rangle = \gamma(\th)$, we arrive at
$$
|X^{\p_{\phi}}|^2 = \frac{\gamma(\th)}{4}\bigg(\frac{k}{\Gamma(\th)}\bigg)^2  
=\frac{a^2\sin^2\theta}{(r_c^2+a^2\cos^2\theta)^3}\bigg(r_c^2+\frac{\Lambda a^2r_c^2}{3}\cos^2\theta\bigg),
$$
where we have again used \cite[eq.~(2.19)]{AA}. Substituting this into \eqref{eq:omega2}, after a small manipulation we obtain
\begin{align}\label{eq:omega3}
\omega=\frac{a^2}{2r_c^4}\int_0^\pi \frac{1}{\big(1+\frac{a^2}{r_c^2}\cos^2\theta \big)^3}
\Big(1+\frac{a^2}{\ell^2}\cos^2\theta\Big)\sin^3\theta\,d\theta,
\end{align}
where $\ell$ is the `scale factor' defined by $\ell^2 = 3/\Lambda$.

While it is difficult to exactly compare $A(\Sigma)$ and $4\pi/(\Lambda + \omega)$,
we will estimate them in terms of $\e = a/\ell$. From \eqref{eq:rc}, we have
\begin{align}\label{eq:rc2}
r_c^2  &= \frac{\ell^2 - a^2 + \sqrt{(\ell^2 - a^2)^2 - 12 a^2 \cdot \ell^2}}{6} \nonumber \\
&= \frac{1 - \e^2 + \sqrt{(1-\e^2)^2 -12 \e^2}}{6} \cdot \ell^2.
\end{align}
Expanding in terms of $\e$,
\begin{align*} 
r_c^2 = \frac{\ell^2}{3}(1- 4\e^2 + 12 \e^4) + O(\e^6).
\end{align*}

Henceforth, we only retain powers up to $\e^2$.  The above then becomes
\begin{align}\label{eq:rcapprox}
r_c^2 \approx \frac{\ell^2}{3}(1- 4\e^2),
\end{align}
which then gives
\begin{align}\label{eq:arcapprox}
\frac{a^2}{r_c^2} &\approx \frac{3}{1-4\e^2} \cdot \frac{a^2}{\ell^2} = \frac{3\e^2}{1-4\e^2}  \approx 3\e^2(1 + 4\e^2) \approx 3\e^2.
\end{align}

Using this estimate, the integral in \eqref{eq:omega3} becomes
\begin{align*}
\int_0^\pi\frac{1}{(1+3\e^2\cos^2\theta)^3}
&\big(1+\e^2\cos^2\theta\big)\sin^3\theta\,d\theta\\
&\approx
\int_0^\pi\big(1- 9\e^2 \cos^2\th\big)\big(1+\e^2\cos^2\theta \big)\sin^3\theta\,d\theta  \\
&\approx \int_0^\pi\big(1- 8\e^2 \cos^2\th\big)\sin^3\theta\,d\theta \\
&= \int_0^\pi\big(\sin^3\theta- 8\e^2 \cos^2\th \sin^3\theta\big)\,d\theta \\
&= \frac43 - 8\cdot\frac{4}{15}\e^2
= \frac43\Big(1 -\frac85 \e^2 \Big).
\end{align*}

Hence,
\begin{align*}
\omega \approx \frac23\cdot\frac{a^2}{r_c^4}\Big(1 -\frac85 \e^2 \Big),
\end{align*}
and so, using the approximations \eqref{eq:rcapprox} and \eqref{eq:arcapprox},
\begin{align*}
\frac{4\pi}{\Lambda + \omega} &\approx \frac{4\pi}{\frac{3}{\ell^2} + \frac23\cdot\frac{a^2}{r_c^4}\big(1 -\frac85 \e^2 \big)} \\
&=\frac{4\pi r_c^2}{3\frac{r_c^2}{\ell^2} + \frac23\cdot\frac{a^2}{r_c^2}\big(1 -\frac85 \e^2 \big) } \\
&\approx \frac{4\pi r_c^2}{1 - 4\e^2 +2\e^2\big(1 -\frac85 \e^2\big)} 
\approx \frac{4\pi r_c^2}{1 - 2\e^2} \\
&\approx 4\pi r_c^2 \, (1 + 2\e^2) = 4\pi r_c^2\,\Big(1 + {\frac{2a^2}{\ell^2}}\Big).
\end{align*}

In a similar, but simpler, fashion, we can estimate $A(\Sigma)$. We have from \eqref{eq:area} that
\begin{align*}
A(\Sigma) &= 4\pi r_c^2\,\Bigg(\frac{1 + \frac{a^2}{r_c^2}}{1 + \frac{a^2}{\ell^2}}\Bigg)\\
&\approx 4\pi r_c^2\,\Big(\frac{1+ 3\e^2}{1+\e^2}\Big)\\
&\approx 4\pi r_c^2\,(1+3\e^2)(1 - \e^2)\\
&\approx 4\pi r_c^2\,(1+2\e^2)=4\pi r_c^2\,\Big(1 + {\frac{2a^2}{\ell^2}}\Big).
\end{align*}

Thus, there is agreement up to order $\e^2 = a^2/\ell^2$.  However, a more detailed analysis shows that a difference occurs at order $\e^4$.  $\Sigma \subset M$ does not quite saturate the area bound \eqref{eq:main}, unless $a = 0$.    

Numerical calculations illustrate how close the bound is. For example:
\begin{itemize}
\item $a = .2\,\ell$: $A(\Sigma) \approx 3.7548\,\ell^2$, $4\pi/(\Lambda + \omega) \approx 3.7651\,\ell^2$; 
\item $a = .25\,\ell$:  $A(\Sigma) \approx 3.2949\,\ell^2$, $4\pi/(\Lambda + \omega) \approx 3.3273\, \ell^2$. 
\end{itemize}

We note, from \eqref{eq:rc2}, that $a$ cannot exceed $.27\,\ell$. Also, comparing to the nonrotating case $a = 0$, $\omega = 0$, where $A(\Sigma) = 4\pi/\Lambda = 4 \pi \ell^2 /3\approx 4.1888\,\ell^2$, we see the influence of 
$\omega$ (and hence the angular momentum) on the size of $\Sigma$.

As  follows from results in \cite{GM2018,GM2025}, and the discussions there on the standard Nariai ($a = 0$) spacetime, there are initial data sets in Nariai that satisfy the assumptions of Theorem~\ref{thm:main}, in particular having surfaces that saturate the area bound.  As such, these initial data sets of course exhibit all the properties in the conclusions of Theorem~\ref{thm:main}.   

While the initial data set considered here in the rotating ($a \ne 0$) Nariai spacetime -- namely $M: \tau =0$, $\Sigma: \psi = \pi$ -- does not satisfy all of the assumptions of Theorem~\ref{thm:main}, it still exhibits  some of the rigidity properties in the conclusions of Theorem~\ref{thm:main}. For instance, as can be shown by direct computation, $M$ is foliated by $2$-spheres $\psi = \psi_0$, $ 0 \le \psi_0 < 2\pi$, each of which is a MOTS, in fact which has vanishing outward null second fundamental, $\chi^+ = 0$. This behavior can be understood from a more general perspective.  

The rotating Nariai spacetime contains stationary (time-independent) regions, bounded by cosmological horizons.  The metric in stationary coordinates is given by  \cite[eq.~(2.19)]{AH},
$$
\bar{g} = \Gamma(\th)\Big(-(1-r^2)dt^2 + \frac{1}{1-r^2} dr^2 + \alpha(\th)d\th^2\Big)
+ \gamma(\th) (d\phi^2 - k \sinh\tau d\psi)^2.
$$
Under a suitable change of coordinates, one verifies that the slice $t=0$ corresponds to our initial data manifold $M: \tau = 0$. We then have the following proposition applicable to the situation at hand.  

\begin{proposition}
Let $(M,g,K)$ be an initial data set in a spacetime $(\bar{M},\bar{g})$  satisfying the null energy condition (NEC), $\textrm{Ric}(X,X) \ge 0$ for all null vectors $X$. Let $\Sigma$  be a locally weakly outermost MOTS in $M$ contained in a stationary region of $\bar{M}$.  
Then there exists an  outer neighborhood  $U \cong [0, \e) \times \Sigma$ of 
$\Sigma$ in $M$ foliated by surfaces $\Sigma_t \cong \{t\} \times \Sigma$, $t \in [0, \e)$, with $\chi^+(t) = 0$.
\end{proposition}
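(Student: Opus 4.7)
Let $\xi$ denote a timelike Killing vector field on the stationary region of $\bar{M}$ containing $\Sigma$. The strategy is to realize the desired foliation of $U \subset M$ as a one-parameter family of intersections $\H^+_s \cap M$, where $\H^+_s$ is obtained by Killing-translating the outgoing null hypersurface from $\Sigma$. Concretely, extend the future-directed outward null normal $\ell = n + \nu$ to $\Sigma$ (with $n$ the future-pointing timelike unit normal to $M$ in $\bar{M}$ and $\nu$ the outward unit normal to $\Sigma$ in $M$) by the null geodesic flow to produce a smooth null hypersurface $\H^+ \subset \bar{M}$ emanating from $\Sigma$ into the future, and, for small $|s|$, set $\H^+_s := \phi^\xi_s(\H^+)$; each $\H^+_s$ is then a null hypersurface isometric to $\H^+$ and carries Killing-translated Raychaudhuri data.

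Because $\xi$ is timelike while the tangent space of any null hypersurface contains no timelike direction, $\xi$ is everywhere transverse to $\H^+$, so $\{\H^+_s\}_{|s|<\e}$ foliates a spacetime neighborhood of $\H^+$; in particular, this neighborhood contains a neighborhood of $\Sigma$ in $M$. Since $\H^+ \subset J^+(M)$, a direct check yields $\H^+_s \cap M = \emptyset$ for $s > 0$, whereas for $s < 0$ small, $\Sigma_s := \H^+_s \cap M$ is a smooth $2$-surface contained in $M_+$, with $\Sigma_0 = \Sigma$. After the reparametrization $t = -s$, the leaves $\{\Sigma_t\}_{t \in [0,\e)}$ provide the outer foliation $U \cong [0,\e) \times \Sigma$ of the conclusion.

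Each $\Sigma_t$ is a cross-section of $\H^+_{-t}$, and its outward null normal in $M$ is, up to positive rescaling, the null generator of $\H^+_{-t}$ at $\Sigma_t$; thus $\chi^+(\Sigma_t)$ coincides with the spacetime null second fundamental form of $\Sigma_t$ inside $\H^+_{-t}$. The Raychaudhuri equation
\begin{equation*}
\ell(\theta^+) = -\tfrac{1}{2}(\theta^+)^2 - |\sigma^+|^2 - \overline{\mathrm{Ric}}(\ell,\ell),
\end{equation*}
together with NEC and $\theta^+|_\Sigma = 0$, forces $\theta^+ \le 0$ on $\H^+$, and by Killing invariance $\theta^+(\Sigma_t) \le 0$ pointwise on $\Sigma_t \subset M_+$. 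A standard perturbation argument (in the spirit of Andersson--Metzger) then upgrades this to $\theta^+ \equiv 0$: were $\theta^+ < 0$ somewhere on some $\Sigma_t$, one could produce a strictly outer trapped surface in $M_+$ homologous to $\Sigma$, contradicting the locally weakly outermost hypothesis. Feeding $\theta^+ \equiv 0$ back into Raychaudhuri yields $\sigma^+ \equiv 0$ on $\H^+$ (and hence on every $\H^+_{-t}$); since $\chi^+ = \sigma^+ + \tfrac12 \theta^+ g_{\Sigma_t}$ on each cross-section, we conclude $\chi^+(\Sigma_t) \equiv 0$, as desired.

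\textbf{Main obstacle.} The delicate point is promoting the Raychaudhuri inequality $\theta^+ \le 0$ to an identity on the $\Sigma_t$: this uses the Killing realization of the $\Sigma_t$ inside $M_+$ together with a perturbation argument to extract a contradiction with local weak outermostness. The remaining ingredients---null geodesic congruences, Killing isometries, and the transversality of timelike $\xi$ to null hypersurfaces---are comparatively standard.
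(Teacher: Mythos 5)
Your proposal is correct and follows essentially the same route as the paper's own sketch: translate by the timelike Killing flow, use the outgoing null hypersurface and the Raychaudhuri equation with the NEC to get $\theta^+ \le 0$ on the slices $\Sigma_t \subset M_+$, rule out strict inequality via an Andersson--Metzger-type deformation against the locally weakly outermost hypothesis, and feed $\theta^+ \equiv 0$ back into Raychaudhuri to kill the shear and hence all of $\chi^+(t)$. The only cosmetic difference is that you Killing-translate the null hypersurface emanating from $\Sigma$, whereas the paper translates $\Sigma$ itself to the past and then exponentiates along its outward null normals; since the Killing flow is an isometry these constructions coincide.
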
 

\begin{proof}[Sketch of the proof]
By stationarity, there exists a timelike Killing vector field $T$ defined in a neighborhood of $\Sigma$.  Move $\Sigma$ under the flow of $T$ a small parameter time $t$ to the past
to obtain a MOTS $S_t$ with future directed outward null normal field $\ell_t^+$.  For $a > 0$ small, the null normal exponential map 
$\Phi : [0,a) \times \Sigma$, 
$\Phi(s, x) = \exp_x(s\ell_t^+)$  is an embedding, and its image is a null hypersurface $\H$, with associated null vector field $N= \Phi_*(\p/\p s)$. 
By keeping things sufficiently local,  $\H$ will intersect $M$ in a surface 
$\Sigma_t$ homologous to $\Sigma$.  Since $S_t$ has zero outward null expansion, $\theta_{S_t}^+ = 0$, the Raychaudhuri equation, together with the NEC, implies that $\Sigma_t$ has null expansion 
$\theta_{\Sigma_t}^+ \le 0$.  In fact, one must have $\theta_{\Sigma_t}^+ = 0$, as otherwise, by \cite[Lemma~5.2]{AM},
one could deform $\Sigma_t$ outward in $M$ to an outer trapped surface, contradicting the weakly outermost condition.  But now, by the Raychaudhuri equation, this forces the full null second fundamental form of $\H$ to vanish, at least up to $M$.  Since $\Sigma_t \subset \H$, this in particular implies that 
$\chi_t^+ = 0$. As this hold for all sufficiently small $t$, the result follows.
\end{proof}

\begin{remark}
Certain elements of the proof are similar to the proof of Theorem~3 in \cite{CM}.  Also, there are a couple of ways to see that the spheres $\psi = \psi_0$ in our rotating Nariai example are locally weakly outer trapped, for example, by using the future null geodesic completeness of rotating Nariai and a one-sided version of the Penrose singularity theorem applied to the universal cover; see also \cite{CGL} for a somewhat different argument.  
\end{remark}

\section*{Acknowledgments}

We are very grateful to Dionysios Anninos for valuable input regarding the rotating Nariai spacetime. We also thank Marcus Khuri for comments that helped to motivate this work. The work of the first-named author was partially supported by the University of Miami. The work of the second-named author was partially supported by the Conselho Nacional de Desenvolvimento Científico e Tecnológico (CNPq, Grant No.~309867/2023-1) and by the Coordenação de Aperfeiçoamento de Pessoal de Nível Superior (CAPES/MATH-AMSUD, Grant No.~88887.985521/2024-00), Brazil.

\section*{Data availability}

No data were generated or analyzed in this study.

\newpage

\bibliographystyle{plain}
\bibliography{bibliography.bib}

\end{document}